\documentclass{article}
\usepackage[utf8]{inputenc}
\usepackage{graphicx}
\usepackage{amsmath}
\usepackage{amsfonts}
\usepackage{lmodern}
\usepackage[T1]{fontenc}
\usepackage[english]{babel}
\usepackage{tikz-cd}
\usepackage{yfonts}
\usepackage{amssymb}
\usepackage{mathtools}
\usepackage{array}
\usepackage{wasysym}
\usepackage{amsthm}
\usepackage{url}
\usepackage{comment}
\theoremstyle{definition}
\newtheorem{definition}{Definition}[section]
\newtheorem{proposition}{Proposition}[definition]
 
\newtheorem{corollary}{Corollary}[definition] 
\newtheorem{lemma}{Lemma}[definition]
\newtheorem{oss}{Remark}[definition]
\newtheorem{es}{Example}[definition]

\usepackage{enumitem}
\newlist{nicenum}{enumerate}{1}
\setlist[nicenum]{%
  label=\textbf{\arabic*.},%
  left=1em,%
  itemsep=0.5ex,%
  topsep=1ex%
}

\title{On the integrability of generalized almost complex structures on $\mathbb{S}^{6}$.}
\author{Andrea Ricciarini}
\date{}
\begin{document}
\maketitle
\begin{abstract}
\hspace{-0.65cm} We study integrability of generalized almost complex structures on the six-dimensional sphere $\mathbb{S}^{6}$. Two notions of integrability are considered: integrability with respect to brackets determined by an affine connection $[\cdot ,\cdot ]_{\nabla}$ (in particular the Levi-Civita connection), and the Courant integrability for strong generalized almost complex structures. After recalling the necessary background on the generalized tangent bundle and on spherical combinations of the canonical generalized structures determined by an almost Hermitian triple $(J,g,\omega )$, we derive local coordinate criteria for $[\cdot ,\cdot ]_{\nabla }$-integrability of weak generalized structures. Applying these formulae to the nearly K\"ahler structure on $\mathbb{S}^{6}$ induced by the octonionic product, we prove that no nontrivial spherical combinations $\mathsf{J}=a\mathsf{J}_{1,J}+b\mathsf{J}_{g}+c\mathfrak{J}_{\omega }$ with smooth coefficients such that $a^{2}+b^{2}+c^{2}=1$ (except $\mathsf{J}_{g}$) is integrable with respect to $[\cdot ,\cdot ]_{\nabla ^{LC}}$. We then turn to Courant integrability: we give sufficient local conditions for Courant integrability of strong generalized almost complex structures, prove a gluing result for local Courant algebroids and $b$-field transforms, and use it to exhibit obstruction results characterizing the impossibility of constructing, via certain gluing procedures, a Courant integrable strong generalized almost complex structures on $\mathbb{S}^{6}$.
\end{abstract}
\begin{small}
\noindent \textbf{Keywords:} generalized geometry, generalized almost complex structures and their integrability, six-dimensional sphere, connections.
\\
\noindent \textbf{2020 MSC:} 53C05   53C15   53D18
\end{small} 
\section{Introduction}
Generalized complex geometry was introduced by Nigel Hitchin in \cite{H} and further developed by Marco Gualtieri in \cite{G1}, \cite{G2}. It blends features of complex and symplectic geometry and is naturally formulated on the generalized tangent bundle $E=TM\oplus T^{*}M$, endowed with its canonical pairing and the Courant bracket. This framework has proven effective for both structural results and the explicit construction of nontrivial examples: many classical notions admit reinterpretations in the generalized setting, and new phenomena arise (\cite{E}, \cite{G1}, \cite{N1}).
\\ In this paper we investigate integrability of generalized almost complex structures on the six-dimensional sphere $\mathbb{S}^{6}$. Two notions of integrability are relevant here: Courant integrability for strong generalized almost complex structures, and integrability with respect to brackets induced by an affine connection $[\cdot ,\cdot ]_{\nabla}$ (introduced in \cite{N1}). The six-dimensional sphere $\mathbb{S}^{6}$, endowed with its canonical nearly K\"ahler structure coming from octonionic product (\cite{E}), provides a natural and rich setting in which to examine these two integrability notions and to test their properties. The paper is organized as follows.
\\ In Section $2$ we recall the basic definitions from \cite{G1}, \cite{N1} and summarize the principal results that are used throughout the article. Following the presentation in \cite{E}, we introduce the nearly K\"ahler structure on the six-dimensional sphere induced by the octonionic product, and we define the spherical combinations, i.e. those weak generalized almost complex structures of the form 
$$\mathsf{J}=a\mathsf{J}_{1,J}+b\mathsf{J}_{g}+c\mathfrak{J}_{\omega},$$
where $\mathsf{J}_{1,J},\mathsf{J}_{g},\mathfrak{J}_{\omega}$ are the generalized almost complex structures associated to the nearly K\"ahler triple $(J,g,\omega )$, and $a,b,c \in C^{\infty}(\mathbb{S}^{6})$ satisfy $a^{2}+b^{2}+c^{2}=1$.
\\ In Section $3$ we first provide sufficient conditions for the integrability of a weak generalized almost complex structure with respect to the bracket induced by an arbitrary affine connection $[\cdot ,\cdot ]_{\nabla}$; we also give local-frame sufficient conditions. These results are then applied to obtain the main original result of the paper: the only spherical combination on $\mathbb{S}^{6}$ that is integrable with respect to the bracket induced by the Levi-Civita connection is $\mathsf{J}_{g}$.
\\ Finally, in Section $4$ we analyze Courant integrability for strong generalized almost complex structures and present local-frame sufficient conditions. We then study a gluing result for generalized tangent bundles and for generalized almost complex structures. Using this gluing construction, we derive obstruction results showing that, under some gluing procedures, it is impossible to obtain a Courant-integrable strong generalized almost complex structure on the six-dimensional sphere $\mathbb{S}^{6}$.
\section{Preliminaries}
Let $M$ be a smooth manifold, let $TM$ be its tangent bundle and $T^{*}M$ its cotangent bundle. In what follows we shall denote the smooth sections of $TM$ by $C^{\infty}(TM)$, of $T^{*}M$ by $C^{\infty}(T^{*}M)$.
\subsection{Geometry of the generalized tangent bundle}
\begin{definition}
\cite{G1,H} Let $M$ be a smooth manifold. The \emph{generalized tangent bundle} is the vector bundle over $M$
$$E:=TM \oplus T^{*}M.$$
Sections of $E$, $C^{\infty}(E)$, are written as $X+\xi $ with $X \in C^{\infty}(TM)$, $\xi \in C^{\infty}(T^{*}M)$.
\\ $E$ carries the natural symmetric pairing of neutral signature
$$\langle X+\xi ,Y+\eta \rangle :=\iota _{X}\eta +\iota _{Y}\xi .$$
The projection $\pi:E\rightarrow TM $ is the projection on the first summand. 
\\ The subbundles $TM$ and $T^{*}M$ are isotropic for $\langle \cdot ,\cdot \rangle $.
\end{definition}
\begin{definition}
\cite{G1,H} The \emph{Dorfman bracket} is the bilinear operation on $C^{\infty}(E)$, $[\cdot ,\cdot ]_{D}$, defined by:
$$[X+\xi ,Y+\eta ]_{D}:=[X,Y]+\mathcal{L}_{X}\eta -\iota _{Y}d\xi ,$$
where $[X,Y]$ is the Lie bracket and $\mathcal{L}$ the Lie derivative. 
\\ The Dorfman bracket satisfies the Jacobi rule:
$$[e_{1},[e_{2},e_{3}]_{D}]_{D}=[[e_{1},e_{2}]_{D},e_{3}]_{D}+[e_{2},[e_{1},e_{3}]_{D}]_{D}$$
$\forall e_{1},e_{2},e_{3} \in C^{\infty}(E)$. The Dorfman bracket is not skew-symmetric.
\end{definition}
\begin{definition}
\cite{G1,H} The \emph{Courant bracket}, $[\cdot ,\cdot ]_{C}$, is the skew-symmetrization of the Dorfman bracket:
$$[e_{1},e_{2}]_{C}:=\frac{1}{2}([e_{1},e_{2}]_{D}-[e_{2},e_{1}]_{D}).$$
It is skew and satisfies the Jacobi identity up to an exact term controlled by the pairing:
$$\sum _{cyc}[[e_{1},e_{2}]_{C},e_{3}]_{C}=\frac{1}{3}d\left(\sum _{cyc} \langle [e_{1},e_{2}]_{C},e_{3}\rangle \right) ,$$
where cyc means cyclic permutations.
\end{definition}
\begin{definition} 
$\cite{Bar,G1}$ Let $b \in \Omega^{2}(TM)$ be a $2$-form. The \emph{$b$-transform} of $E$ is the $\langle \cdot ,\cdot \rangle $-orthogonal automorphism of $E$ defined as:
$$e^{b}(X+\xi )=X+\xi +\iota _{X}b$$
for all $X+\xi \in C^{\infty}(E)$. 
\\ If, in addition, $b$ is $d$-closed, then $e^{b}$ preserves the Courant bracket and is referred to as \emph{$b$-field transform}.
\end{definition}
\begin{definition} 
\cite{G1,H} Let $H \in \Omega ^{3}(TM)$ be a given $3$-form. The \emph{H-twisted Dorfman bracket} on $C^{\infty}(E)$, $[\cdot ,\cdot ]_{D}^{H}$, is defined by:
$$[X+\xi ,Y+\eta ]_{D}^{H}=[X,Y]+\mathcal{L}_{X}\eta -\iota _{Y}d\xi +\iota _{X}\iota _{Y}H,$$
for $X+\xi ,Y+\eta \in C^{\infty}(E)$.
\\ The $H$-twisted Courant bracket, $[\cdot ,\cdot ]_{C}^{H}$, is the skew-symmetrization of the $H$-twisted Dorfman bracket:
$$[X+\xi ,Y+\eta ]_{C}^{H}=\frac{1}{2}([X+\xi ,Y+\eta ]_{D}^{H}-[Y+\eta ,X+\xi ]_{D}^{H})=$$
$$=[X,Y]+\frac{1}{2}(\mathcal{L}_{X}\eta -\mathcal{L}_{Y}\xi )-\frac{1}{2}d(\iota _{X}\eta -\iota _{Y}\xi )+\iota _{X}\iota _{Y}H.$$
The twisted brackets preserve the canonical pairing $\langle \cdot ,\cdot \rangle $ and are equivariant under $b$-field transforms.
\end{definition}
\begin{definition}
\cite{N1} Given an affine connection $\nabla $ on $TM$, it is possible to define the \emph{connection-induced bracket} on $C^{\infty}(E)$, $[\cdot ,\cdot ]_{\nabla}$, as:
$$[X+\xi ,Y+\eta ]_{\nabla}=[X,Y]+\nabla _{X}\eta -\nabla _{Y}\xi .$$
This bracket is skew-symmetric and it satisfies the Jacobi identity if and only if the curvature of $\nabla $ vanishes.
\end{definition}

\subsection{Generalized complex structures}
\begin{definition}
\cite{G1,H} A \emph{generalized almost complex structure à la Hitchin} is an endomorphism $\mathfrak{J}:E \rightarrow E$ such that:
$$ \mathfrak{J}^{2}=-Id, \hspace{0.3cm} \langle \mathfrak{J}v,\mathfrak{J}w \rangle =\langle v,w \rangle \hspace{0.25cm} \forall v,w \in C^{\infty}(E).$$
Equivalently, the $+i$-eigenbundle $V_{i}\subset E\otimes \mathbb{C}$ is a maximal isotropic subbundle. The structure $\mathfrak{J}$ is integrable if and only if $V_{i}$ is closed under the Courant bracket (i.e. $V_{i}$ is involutive). 
\\ In the pure spinor formulation, integrability is equivalent, locally, to the existence of a nonzero pure spinor $\rho$ generating $(detV_{i})^{\perp}$ with $d\rho =0$. A generalized almost complex structure integrable with respect to $[\cdot,\cdot]_{C}$ is called a \emph{generalized complex structure}.
\end{definition}
\begin{definition}
Let $\mathfrak{J}$ be an endomorphism of $E$ and let $[\cdot,\cdot]$ be a bilinear bracket on $C^{\infty}(E)$. We define the \emph{Nijenhuis operator} of $\mathfrak{J}$ with respect to $[\cdot,\cdot]$ as the $\mathbb{R}$-bilinear map:
$$N_{\mathfrak{J}}:C^{\infty}(E)\times C^{\infty}(E) \rightarrow C^{\infty}(E)$$
given by the formula:
$$N_{\mathfrak{J}}(v,w)=[\mathfrak{J}v,\mathfrak{J}w]-\mathfrak{J}[\mathfrak{J}v,w]-\mathfrak{J}[v,\mathfrak{J}w]+\mathfrak{J}^{2}[v,w]$$
for all $v,w \in C^{\infty}(E)$.
\end{definition}
\begin{oss}
$N_{\mathfrak{J}}$ defined above is an operator. Whether one calls it a tensor depends on extra properties of the bracket and of $\mathfrak{J}$.
\end{oss}
\begin{definition}
\cite{N1} In the most general sense, a \emph{generalized almost complex structure} on $M$ is an almost complex structure $\mathsf{J}$ on $E$, that is, an endomorphism $\mathsf{J}:E \rightarrow E$ such that $\mathsf{J}^{2}=-Id$.
\end{definition}
\begin{oss}
\cite{E} Generalized almost complex structures à la Hitchin are referred to in the literature as \emph{strong}, whereas those just introduced are called \emph{weak}.
\end{oss}
\begin{oss}
In general, one cannot speak of the Nijenhuis tensor with respect to the Courant bracket for weak generalized almost complex structures, since such structures are not a priori compatible with the natural pairing $\langle \cdot ,\cdot \rangle $.
\\ However, one can still speak of the Nijenhuis tensor with respect to the bracket $[\cdot,\cdot]_{\nabla}$. 
\\ For strong generalized almost complex structures, the Nijenhuis operator is indeed a tensor with respect to both the Courant bracket $[\cdot,\cdot]_{C}$ and the bracket $[\cdot,\cdot]_{\nabla}$, because in this case the structures are compatible with the pairing $\langle \cdot ,\cdot \rangle$.
\\ So a weak (or strong) generalized almost complex structure $\mathsf{J}$ is said to be \emph{$[\cdot,\cdot]_{\nabla }$-integrable} if its Nijenhuis tensor with respect to $[\cdot,\cdot]_{\nabla }$, $N_{\mathsf{J}}^{[\cdot,\cdot]_{\nabla }}$, vanishes.
\end{oss}
\begin{proposition}
\cite{Riccia} Let $\mathfrak{J}$ be a strong generalized almost complex structure. Then the following results hold:
\begin{nicenum}[label=\roman*.]
\item The Nijenhuis tensor of $\mathfrak{J}$ with respect  to $[\cdot,\cdot]_{C}$, $N_{\mathfrak{J}}$, vanishes if and only if the $+i$-eigenspace $V_{i}$ of $\mathfrak{J}$ is $[\cdot,\cdot]_{C}$-involutive.
\item The Nijenhuis tensor of $\mathfrak{J}$ with respect  to $[\cdot,\cdot]_{\nabla}$, $N_{\mathfrak{J}}^{[\cdot,\cdot]_{\nabla}}$, vanishes if and only if the $\pm i$-eigenspace $V_{\pm i}$ of $\mathfrak{J}$ are both $[\cdot,\cdot]_{\nabla }$-involutive (the same holds for weak structures).
\end{nicenum}
\end{proposition}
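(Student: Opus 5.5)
The plan is the standard eigenbundle computation. Complexify everything and extend both brackets $\mathbb{C}$-bilinearly. Since $\mathsf{J}^{2}=-Id$, there is a splitting $E\otimes\mathbb{C}=V_{i}\oplus V_{-i}$, with projections
$$P_{\pm}=\tfrac{1}{2}(Id\mp i\mathsf{J}).$$
Because $N_{\mathsf{J}}$ is a tensor in each case considered, it is enough to evaluate it on sections taken from these eigenbundles. Both $[\cdot,\cdot]_{C}$ and $[\cdot,\cdot]_{\nabla}$ are skew-symmetric, so $N_{\mathsf{J}}$ is skew as well. This leaves three cases: $(v,w)\in V_{i}\times V_{i}$, $V_{-i}\times V_{-i}$, and $V_{i}\times V_{-i}$.

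\textbf{The three cases.}

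1. For $v,w\in C^{\infty}(V_{i})$, substituting $\mathsf{J}v=iv$ and $\mathsf{J}w=iw$ gives
$$N_{\mathsf{J}}(v,w)=-[v,w]-2i\mathsf{J}[v,w]-[v,w]=-2(Id+i\mathsf{J})[v,w]=-4P_{-}[v,w].$$
So $N_{\mathsf{J}}$ vanishes on $V_{i}\times V_{i}$ exactly when $[V_{i},V_{i}]\subset V_{i}$.

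2. By the same computation, on $V_{-i}\times V_{-i}$ one gets $N_{\mathsf{J}}(v,w)=-4P_{+}[v,w]$, which vanishes exactly when $V_{-i}$ is involutive.

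3. For $v\in V_{i}$ and $w\in V_{-i}$, the four terms are
$$[iv,-iw]-\mathsf{J}[iv,w]-\mathsf{J}[v,-iw]-[v,w]=[v,w]-[v,w]=0,$$
since the two middle terms cancel. So the mixed case always vanishes.

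Hence $N_{\mathsf{J}}=0$ if and only if both $V_{i}$ and $V_{-i}$ are involutive. This proves item ii for any bracket that is skew-symmetric and for which $N_{\mathsf{J}}$ is a tensor. Nothing in the argument used compatibility with the pairing, which is why the parenthetical claim for weak structures also holds.

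\textbf{Item i.} Here we also use that $V_{-i}=\overline{V_{i}}$. Since $\mathfrak{J}$ is real, $\overline{N_{\mathfrak{J}}(v,w)}=N_{\mathfrak{J}}(\bar v,\bar w)$, and since the Courant bracket is real, $\overline{[v,w]_{C}}=[\bar v,\bar w]_{C}$. Therefore involutivity of $V_{i}$ implies involutivity of $V_{-i}$, and the characterization from item ii reduces to the single condition that $V_{i}$ is $[\cdot,\cdot]_{C}$-involutive.

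The same conjugation argument applies to $[\cdot,\cdot]_{\nabla}$ whenever $\mathsf{J}$ is real, so in that case too the two involutivity conditions collapse to one. Even so, I would keep the statement of item ii in the symmetric form.

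\textbf{Main obstacle.} The only delicate point is the reduction to eigen-sections, which relies on $N_{\mathsf{J}}$ being $C^{\infty}(M)$-bilinear. Without this, a section $v\in C^{\infty}(E\otimes\mathbb{C})$ cannot be decomposed as $P_{+}v+P_{-}v$ inside $N_{\mathsf{J}}$.

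- For the Courant bracket on a strong structure, the anomalous terms in $[v,fw]_{C}$ are of the form $\langle v,w\rangle\,df$. In $N_{\mathfrak{J}}$ they assemble as $\langle\mathfrak{J}v,\mathfrak{J}w\rangle-\langle v,w\rangle$, up to $\mathfrak{J}$ acting on $df$, together with the term $-\mathfrak{J}(\langle\mathfrak{J}v,w\rangle+\langle v,\mathfrak{J}w\rangle)df$. All of these vanish by orthogonality of $\mathfrak{J}$.
- For $[\cdot,\cdot]_{\nabla}$, the Leibniz rule is
$$[v,fw]_{\nabla}=f[v,w]_{\nabla}+(\pi v)(f)\,w-(\pi w)(f)\,\cdot\ \text{(form part of } v\text{)}.$$
I would verify tensoriality by expanding this rule directly.

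If tensoriality turns out to be problematic for $[\cdot,\cdot]_{\nabla}$ with weak structures, the fallback is the remark preceding the proposition, which asserts that the Nijenhuis operator is a tensor in the cases considered. I would invoke that remark and then run the eigenbundle argument above.
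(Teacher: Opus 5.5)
The paper does not prove this proposition; it is quoted from \cite{Riccia}, so there is no internal argument to compare against. Your eigenbundle argument is the standard one, and it is correct. The three case computations give $N_{\mathsf{J}}(v,w)=-4P_{-}[P_{+}v,P_{+}w]-4P_{+}[P_{-}v,P_{-}w]$. The two summands lie in the complementary bundles $V_{-i}$ and $V_{i}$, so $N_{\mathsf{J}}=0$ is equivalent to involutivity of both. The conjugation step $\overline{V_{i}}=V_{-i}$ then correctly collapses the Courant condition to involutivity of $V_{i}$.

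Your ``main obstacle'' is not actually an obstacle. The $P_{\pm}$ are smooth global bundle endomorphisms of $E\otimes\mathbb{C}$, so the $P_{\pm}v$ are global smooth sections of $V_{\pm i}$. The splitting of $N_{\mathsf{J}}(v,w)$ into the four eigen-pieces therefore follows from biadditivity of $N_{\mathsf{J}}$ alone. Both ``$N_{\mathsf{J}}=0$'' and ``involutive'' are conditions on all sections, so no $C^{\infty}(M)$-linearity is ever used.

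In particular, your argument never uses orthogonality. It proves both items for any real $\mathsf{J}$ with $\mathsf{J}^{2}=-Id$ and any real bracket. Strongness matters only for whether $N_{\mathsf{J}}$ is a tensor, which is a separate question.

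Your Leibniz rule for $[\cdot,\cdot]_{\nabla}$ also contains a spurious term. Because $\nabla_{fY}\xi=f\nabla_{Y}\xi$, the correct rule is $[v,fw]_{\nabla}=f[v,w]_{\nabla}+(\pi v)(f)\,w$. With this rule, the anchor terms in $N_{\mathsf{J}}(v,fw)-fN_{\mathsf{J}}(v,w)$ cancel pairwise: $\pm(\pi\mathsf{J}v)(f)\,\mathsf{J}w$ against each other, and $\pm(\pi v)(f)\,\mathsf{J}^{2}w$ against each other. No pairing term is left over, so $N_{\mathsf{J}}^{[\cdot,\cdot]_{\nabla}}$ is tensorial for every endomorphism $\mathsf{J}$, weak or strong. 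The fallback to the preceding remark is therefore unnecessary.
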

\begin{proposition}
\cite{N1} Let $\mathsf{J}=\left[ \begin{matrix} H && \alpha \\ \beta && K \end{matrix}\right]$ be an endomorphism of $E$ where \\ $H:TM\rightarrow TM$, $\alpha :T^*M\rightarrow TM$, $\beta :TM\rightarrow T^*M$, $K:T^*M\rightarrow T^{*}M$. Then $\mathsf{J}$ is a generalized almost complex structure if and only if the following conditions hold: 
$$\begin{cases} \beta \alpha =-(I+K^2) \\ \alpha \beta =-(I+H^2) \\ H\alpha =-\alpha K \\ \beta H=-K\beta \end{cases}.$$ 
Moreover $\mathsf{J}$ is invariant with respect to the scalar product $\langle \cdot ,\cdot \rangle $, i.e. it is strong, if and only if, in addition to the above conditions, the following also hold:
$$\begin{cases} K=-H^*\\ \alpha =-\alpha ^* \\ \beta =-\beta ^* \end{cases}$$
where $H^*:T^*M\rightarrow T^*M$ is the dual of $H$ defined by $H^*(\xi )(X)=\xi (H(X))$. The condition $\alpha =-\alpha ^*$ means that $\alpha (\xi )(\eta )=-\alpha (\eta )(\xi )$ for all $\xi ,\eta \in T^*M$ while $\beta = -\beta ^*$ means that $\beta (X)(Y)=-\beta (Y)(X)$ for all $X,Y \in C^{\infty}(TM)$. 
\end{proposition}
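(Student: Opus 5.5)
Expanding $\mathsf{J}^{2}$ in block form and comparing with $-Id$ block by block gives the first system. Comparing $\langle \mathsf{J}v,\mathsf{J}w\rangle$ with $\langle v,w\rangle$, using $\mathsf{J}^{2}=-Id$ to reduce invariance to skew-adjointness, gives the second.

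\textbf{Step 1: the square.} Write $\mathsf{J}(X+\xi)=(HX+\alpha\xi)+(\beta X+K\xi)$ and compose twice. The block matrix of $\mathsf{J}^{2}$ is
$$\mathsf{J}^{2}=\left[\begin{matrix} H^{2}+\alpha\beta && H\alpha+\alpha K \\ \beta H+K\beta && \beta\alpha+K^{2}\end{matrix}\right].$$
Since $E=TM\oplus T^{*}M$, we have $\mathsf{J}^{2}=-Id$ if and only if each block equals the corresponding block of $-Id$. That means $H^{2}+\alpha\beta=-I$, $\beta\alpha+K^{2}=-I$, and the two off-diagonal blocks vanish. These are exactly the four stated conditions, and both implications are immediate.

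\textbf{Step 2: compatibility with the pairing.} Assume $\mathsf{J}^{2}=-Id$. Invariance $\langle\mathsf{J}v,\mathsf{J}w\rangle=\langle v,w\rangle$ for all $v,w$ is then equivalent to skew-adjointness $\langle\mathsf{J}v,w\rangle=-\langle v,\mathsf{J}w\rangle$. Indeed, substituting $w\mapsto\mathsf{J}w$ in the skew-adjointness identity gives $\langle\mathsf{J}v,\mathsf{J}w\rangle=-\langle v,\mathsf{J}^{2}w\rangle=\langle v,w\rangle$. Conversely, replacing $w$ by $\mathsf{J}w$ in the invariance identity gives $-\langle\mathsf{J}v,w\rangle=\langle v,\mathsf{J}w\rangle$.

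Since the pairing is bilinear and $E$ splits as $TM\oplus T^{*}M$, it suffices to test skew-adjointness on pure elements.
\begin{itemize}
\item Taking $v=X$, $w=Y$ gives $\langle\beta X,Y\rangle+\langle X,\beta Y\rangle=0$, i.e. $\beta(X)(Y)=-\beta(Y)(X)$, which is $\beta=-\beta^{*}$.
\item Taking $v=\xi$, $w=\eta$ gives $\eta(\alpha\xi)+\xi(\alpha\eta)=0$, which is $\alpha=-\alpha^{*}$.
\item Taking $v=X$, $w=\eta$ gives $\langle HX+\beta X,\eta\rangle=\eta(HX)$ and $-\langle X,\alpha\eta+K\eta\rangle=-(K\eta)(X)$. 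So the condition reads $H^{*}\eta=-K\eta$, i.e. $K=-H^{*}$.
\item The mixed case $v=\xi$, $w=Y$ gives the same condition by symmetry of the pairing.
\end{itemize}
The pure pairs are isotropic, which is why only these cross terms survive. Summing over the components recovers skew-adjointness for general $v,w$, so the converse holds as well.

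\textbf{Main obstacle.} There is no real difficulty here. The one point needing care is the reduction from orthogonality to skew-adjointness in Step 2, which uses $\mathsf{J}^{2}=-Id$. This is why the three extra conditions characterize strong structures only \emph{in addition to} the first system. Beyond that, one only needs to keep track of which pure pairs are isotropic.
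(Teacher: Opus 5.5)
Your proof is correct and complete. The block expansion of $\mathsf{J}^{2}$ gives exactly the first system. Reducing orthogonality to skew-adjointness, which is where $\mathsf{J}^{2}=-Id$ is genuinely needed, and then testing on the pure pairs $(X,Y)$, $(\xi,\eta)$, $(X,\eta)$ yields precisely $\beta=-\beta^{*}$, $\alpha=-\alpha^{*}$ and $K=-H^{*}$.

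The paper gives no proof of its own here, since the statement is quoted from \cite{N1}, so there is nothing to compare against. Your argument is the standard direct verification.
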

\begin{es}
Let $(M,g)$ be a Riemannian manifold. Then
$$\mathsf{J}_{g}=\left(\begin{matrix}
0 && -g^{-1}\\
g && 0 \\ 
\end{matrix}\right)$$
is a weak generalized almost complex structure on $M$.
\end{es}
\begin{es}
Let $(M,\omega)$ be an almost symplectic manifold. Then
$$\mathfrak{J}_{\omega}=\left(\begin{matrix}
0 && -\omega ^{-1}\\
\omega && 0 \\ 
\end{matrix}\right)$$
is a strong generalized almost complex structure on $M$.
\end{es}
\begin{es}
Let $(M,J)$ be an almost complex manifold. Then
$$\mathsf{J}_{1,J}=\left(\begin{matrix}
J && 0\\
0 && J* \\ 
\end{matrix}\right)$$
is a weak generalized almost complex structure on $M$, while
$$\mathfrak{J}_{-1,J}=\left(\begin{matrix}
J && 0\\
0 && -J* \\ 
\end{matrix}\right)$$
is a strong generalized almost complex structure.
\end{es}
\begin{proposition}
\cite{E} Let $(M,J,g)$ be an almost Hermitian manifold. Then, a linear combination $a\mathsf{J}_{\lambda ,J}+b\mathsf{J}_{g}+c\mathfrak{J}_{\omega}$ with $a,b,c \in C^{\infty}(M)$ is a weak generalized almost complex structure if and only if $\lambda =1$ and $a^{2}+b^{2}+c^{2}=1$. The structure is strong if and only if $\lambda =-1$ and $a\equiv \pm 1$, or $c\equiv \pm 1$.
\\ We will call such a $C^{\infty}(M)$-linear combination a \emph{spherical combination} of \\ $(\mathsf{J}_{1,J}, \mathsf{J}_{g},\mathfrak{J}_{\omega})$.
\end{proposition}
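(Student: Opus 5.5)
We prove the statement by a direct block computation, using the preceding Proposition of \cite{N1}. That result reduces being a (weak or strong) generalized almost complex structure to algebraic identities among the blocks $H,\alpha,\beta,K$. For $\mathsf{J}=a\mathsf{J}_{\lambda,J}+b\mathsf{J}_{g}+c\mathfrak{J}_{\omega}$ the blocks are
$$H=aJ,\quad K=\lambda aJ^{*},\quad \alpha=-bg^{-1}-c\omega^{-1},\quad \beta=bg+c\omega .$$
Here $g,\omega$ are viewed as maps $TM\to T^{*}M$. We use the almost Hermitian relations $\omega=g(J\cdot,\cdot)$, i.e. $\omega=gJ$ as maps, $J^{2}=-I$ and $J^{*}g=-gJ$ (compatibility). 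Throughout we work pointwise, so smooth coefficients behave like constants.

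\textbf{Weak case.} First we compute $\beta\alpha=-(bg+c\omega)(bg^{-1}+c\omega^{-1})$. Since $\omega^{-1}=-J g^{-1}$ and $\omega=gJ$, this expands to
$$\beta\alpha=-\big(b^{2}I+bc(-gJg^{-1}+gJg^{-1})+c^{2}I\big)=-(b^{2}+c^{2})I,$$
and the cross terms cancel. Similarly $\alpha\beta=-(b^{2}+c^{2})I$.

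The conditions $\alpha\beta=-(I+H^{2})$ and $\beta\alpha=-(I+K^{2})$ become $b^{2}+c^{2}=1-a^{2}$ and $b^{2}+c^{2}=1-\lambda^{2}a^{2}$. Hence $a^{2}+b^{2}+c^{2}=1$, and $\lambda^{2}=1$ wherever $a\neq 0$.

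Next we examine $H\alpha=-\alpha K$. Using $Jg^{-1}=-g^{-1}J^{*}$, we get $H\alpha=-a(bJg^{-1}+cJ\omega^{-1})=a(bg^{-1}J^{*}+\dots)$. Comparing with $-\alpha K=\lambda a(bg^{-1}+c\omega^{-1})J^{*}$, the $b$-term forces $\lambda=1$ wherever $ab\neq0$. For the $c$-term we compute $J\omega^{-1}$ against $\omega^{-1}J^{*}$, using $\omega^{-1}=-Jg^{-1}$. The condition $\beta H=-K\beta$ is checked in the same way.

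The main obstacle is the bookkeeping of the sign conventions relating $J$, $J^{*}$, $g$ and $\omega$. We will verify these identities once and tabulate them, and then read off $\lambda=1$. For the converse, with $\lambda=1$ and $a^{2}+b^{2}+c^{2}=1$, all four identities hold by the same computation.

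\textbf{Strong case.} We add the conditions $K=-H^{*}$, $\alpha=-\alpha^{*}$ and $\beta=-\beta^{*}$. Since $H^{*}=aJ^{*}$, the condition $K=-H^{*}$ says $\lambda aJ^{*}=-aJ^{*}$, i.e. $\lambda=-1$ wherever $a\neq0$. Because $g$ is symmetric and $\omega$ is skew, $\beta=-\beta^{*}$ gives $b=0$.

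With $\lambda=-1$ and $a\neq0$, the weak identity $H\alpha=-\alpha K$ now reads $J\omega^{-1}=\omega^{-1}J^{*}$ with the opposite sign from before, which forces $ac=0$. Combining these with $a^{2}+c^{2}=1$ gives, at each point, either $a=\pm1$, $c=0$, or $a=0$, $c=\pm1$. By continuity of $a$ on connected $M$ we conclude $a\equiv\pm1$ or $c\equiv\pm1$.

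Conversely, $\pm\mathfrak{J}_{-1,J}$ and $\pm\mathfrak{J}_{\omega}$ are strong by the examples above, which proves the equivalence.
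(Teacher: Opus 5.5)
The paper gives no proof of this statement (it is quoted from \cite{E}), so your argument has to stand on its own, and its weak half does not. Your block computation is right. Once the deferred $c$-term is done ($J\omega^{-1}=g^{-1}$, $\omega^{-1}J^{*}=-g^{-1}$), it yields exactly the pointwise conditions $a^{2}+b^{2}+c^{2}=1$, $a^{2}(\lambda^{2}-1)=0$ and
\[
H\alpha+\alpha K=a(1-\lambda)\,g^{-1}\bigl(bJ^{*}-c\,\mathrm{Id}\bigr)=0 ,
\]
with the analogous identity $\beta H+K\beta=a(1-\lambda)\,g\bigl(bJ-c\,\mathrm{Id}\bigr)=0$. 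Since $(bJ^{*}-c\,\mathrm{Id})(-bJ^{*}-c\,\mathrm{Id})=(b^{2}+c^{2})\mathrm{Id}$, this is equivalent to $a(1-\lambda)b=a(1-\lambda)c=0$. That invertibility is what you silently use when you treat ``the $b$-term'' and ``the $c$-term'' separately.

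These conditions force $\lambda=1$ only if there is a point where $a\neq0$ and $(b,c)\neq(0,0)$. Otherwise the step ``read off $\lambda=1$'' fails. For $a\equiv0$ the parameter $\lambda$ does not enter at all. For $a\equiv\pm1$, $b\equiv c\equiv0$, $\lambda=-1$ you get $\pm\mathfrak{J}_{-1,J}=\pm\,\mathrm{diag}(J,-J^{*})$, whose square is $-\mathrm{Id}$. Your own strong-case paragraph declares exactly these structures strong, hence weak, so your two halves contradict each other.

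So the ``only if $\lambda=1$'' part of the first sentence is false as literally stated, and no proof can supply it; you should have flagged this rather than asserted it. What your computation actually proves, on connected $M$, is the following. The combination is weak iff $a^{2}+b^{2}+c^{2}=1$ and one of these holds: $\lambda=1$; or $a\equiv0$; or $\lambda=-1$ with $a\equiv\pm1$, $b\equiv c\equiv0$. The last case uses that when $ab=ac=0$ pointwise, $\{a=0\}$ and $\{a^{2}=1\}$ are disjoint closed sets covering $M$. In particular, under the nondegeneracy hypothesis $a\not\equiv0$, $(b,c)\not\equiv(0,0)$, weakness does force $\lambda=1$, which is presumably the intended content.

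With that correction, your ``if'' direction is correct. So is your strong case: $\beta=-\beta^{*}\Rightarrow b=0$, then $K=-H^{*}\Rightarrow a(1+\lambda)=0$, then the off-diagonal identity $\Rightarrow ac=0$, and finally connectedness of $M$.
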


\subsection{Nearly K\"ahler structure on $\mathbb{S}^{6}$}
We now introduce the nearly K\"ahler structure on the six-dimensional sphere $(\mathbb{S}^{6},g,J)$ arising from the pure octonions product. For this purpose, we proceed as in \cite{E}.
\\ This will be useful, since in the next chapter we shall study the $\nabla^{LC}$-integrability of the spherical combination induced by this structure (here $\nabla^{LC}$ denotes the Levi–Civita connection).
\\ We will work in spherical coordinates for $\mathbb{S}^{6} \subset \mathbb{R}^{7}$. In particular, if we take angular coordinates $u^{1},\dots ,u^{6}$ such that $u^{1}, \dots ,u^{5} \in (0, \pi )$ and $u^{6} \in (0, 2\pi )$, the coordinates of $\mathbb{S}^{6}$ are:
$$ \begin{cases}
x^{1}=cos(u^{1}) \\ 
x^{2}=sin(u^{1})cos(u^{2})\\
\vdots \\ 
x^{6}=sin(u^{1})sin(u^{2})sin(u^{3})sin(u^{4})sin(u^{5})cos(u^{6})\\
x^{7}=sin(u^{1})sin(u^{2})sin(u^{3})sin(u^{4})sin(u^{5})sin(u^{6})
\end{cases}.$$
As shown in \cite{E}, by regarding $\mathbb{S}^{6}$ as an embedded submanifold of $\mathbb{R}^{7}$, one obtains the following expression for the metric $g$ in spherical coordinates:
$$g_{ij}=\begin{cases}
1  & \text{if } i=j=1 \\ 
sin^{2}(u^{1})\dots sin^{2}(u^{i-1}) & \text{if } i=j\neq 1\\ 
0 & \text{if } i\neq j.
\end{cases}$$
The local representation of the inverse metric follows directly from the previous equation:
$$g^{ij}=\begin{cases}
1  & \text{if } i=j=1 \\ 
\frac{1}{sin^{2}(u^{1})\dots sin^{2}(u^{i-1})} & \text{if } i=j\neq 1\\ 
0 & \text{if } i\neq j.
\end{cases}$$
The almost complex structure $J$ on $\mathbb{S}^{6}$ is induced by the multiplication of pure octonions in $\mathbb{R}^{7}$. Denoting this product by $\times$, the endomorphism $J_{p}:T_{p}\mathbb{S}^{6}\rightarrow T_{p}\mathbb{S}^{6}$ is defined at each point $p \in \mathbb{S}^{6}$ by
$$J_{p}(v)=p\times v$$
for every $v \in T_{p}\mathbb{S}^{6}$. An explicit description of the octonionic multiplication may be found, for instance, in \cite{Chen}.
As the full coordinate expression of $J$ is prohibitively long, we only record a few components $J^{i}_{j}$ computed in \cite{E}:
\begin{equation} \label{eq:1indicebasso}
J\partial_{1}=\frac{cos(u^{3})}{sin(u^{1})}\partial_{2}-\frac{cos(u^{2})sin(u^{3})}{sin(u^{1})sin(u^{2})}\partial_{3}+\frac{cos(u^{5})}{sin(u^{1})}\partial_{4}+\frac{cos(u^{4})sin(u^{5})}{sin(u^{1})sin(u^{4})}\partial_{5}-\frac{1}{sin(u^{1})}\partial_{6}.
\end{equation}
Therefore, as $J\partial_{1}=J_{1}^{k}\partial_{k}$ we have the expressions for $J_{1}^{k}$. Furthermore, the components $J_{k}^{1}$ are given by  $$J_{k}^{1}=-g_{kk}J_{1}^{k}.$$
\\ For any $X,Y \in C^{\infty}(T\mathbb{S}^{6})$, one checks that $g(JX,JY)=g(X,Y)$. Setting $\omega (X,Y)=g(JX,Y)$, Proposition 2.9.3 guarantees that every spherical combination of the generalized structures $(\mathsf{J}_{1,J},\mathsf{J}_{g},\mathfrak{J}_{\omega})$ is a weak generalized almost complex structure.

\section{$[\cdot ,\cdot ]_{\nabla}$-integrability}
\subsection{Integrability}
In this section we derive the conditions for $[\cdot ,\cdot ]_{\nabla}$-integrability of a weak generalized almost complex structure $\mathsf{J}$. We initially adopt the framework of $\cite{N1}$ and subsequently develop the resulting equations in local coordinates.
\begin{proposition}
Let $\nabla$ be an affine connection on $M$ and let $\mathsf{J}=\left( \begin{matrix} H && \alpha \\ \beta && K \end{matrix} \right) $ be a weak generalized almost complex structure on $M$. Let $N^{\nabla}(\mathsf{J})$ be the Nijenhuis tensor of $\mathsf{J}$ with respect to $\nabla$.
Then $\mathsf{J}$ is $[\cdot ,\cdot ]_{\nabla}$-integrable if and only if the following conditions hold for all $X,Y \in C^{\infty}(M)$ and for all $\xi , \eta \in C^{\infty}(T^{*}M)$:

\begin{equation}
\begin{aligned}
N_{\mathsf{J}}^{[\cdot ,\cdot ]_{\nabla}}(X,Y)\big|_{C^{\infty}(TM)}
&=(\nabla_{HX}H)Y-(\nabla_{HY}H)X -H((\nabla_{X}H)Y +\\
&\quad -(\nabla_{Y}H)X) -\alpha ((\nabla_{X}\beta) Y-(\nabla_{Y}\beta ) X) +\\ 
&\quad -T^{\nabla}(HX,HY)-T^{\nabla}(X,Y)+H(T^{\nabla}(HX,Y) +\\ 
&\quad +T^{\nabla}(X,HY)) =0,
\end{aligned}
\end{equation}

\begin{equation} 
\begin{aligned}
N_{\mathsf{J}}^{[\cdot ,\cdot ]_{\nabla}}(X,Y)\big|_{C^{\infty}(T^{*}M)}
&=(\nabla_{HX}\beta )Y-(\nabla_{HY}\beta )X+\beta((\nabla _{X}H)Y +\\ 
&\quad -(\nabla _{Y}H)X)-K((\nabla _{X}\beta) (Y)-(\nabla_{Y} \beta )(X))+ \\
&\quad +\beta (T^{\nabla} (HX,Y)+
T^{\nabla} (X,HY))=0,
\end{aligned}
\end{equation}

\begin{equation} 
N_{\mathsf{J}}^{[\cdot ,\cdot ]_{\nabla}}(\xi ,\eta )\big|_{C^{\infty}(TM)}
=(\nabla _{\alpha \xi}\alpha )\eta -(\nabla _{\alpha \eta }\alpha )\xi -T^{\nabla}(\alpha \xi ,\alpha \eta )=0,
\end{equation}

\begin{equation} 
N_{\mathsf{J}}^{[\cdot ,\cdot ]_{\nabla}}(\xi ,\eta )\big|_{C^{\infty}(T^{*}M)}=(\nabla _{\alpha \xi}K)\eta -(\nabla _{\alpha \eta }K)\xi =0,
\end{equation}

\begin{equation} 
\begin{aligned}
N_{\mathsf{J}}^{[\cdot ,\cdot ]_{\nabla}}(X,\eta )\big|_{C^{\infty}(TM)}
&=(\nabla _{HX}\alpha )\eta -(\nabla _{\alpha \eta }H)X+ \\ 
&\quad -H((\nabla _{X}\alpha )\eta )-\alpha ((\nabla _{X}K) \eta )+ \\ 
&\quad -T^{\nabla }(HX,\alpha \eta )+H^{T^{\nabla }}(X, \alpha \eta ))=0,
\end{aligned}
\end{equation}
\begin{equation} 
\begin{aligned}
N_{\mathsf{J}}^{[\cdot ,\cdot ]_{\nabla}}(X,\eta )\big|_{C^{\infty}(T^{*}M)}
&=(\nabla _{HX}K)\eta -(\nabla _{\alpha \eta }\beta )X-\beta ((\nabla _{X}\alpha )\eta )+ \\ 
&\quad -K((\nabla _{X}K)\eta )+ \beta (T^{\nabla}(X, \alpha \eta ))=0.
\end{aligned}
\end{equation}
where $T^{\nabla}(X,Y)=\nabla _{X}Y-\nabla _{Y}X-[X,Y]$ is the torsion of the affine connection $\nabla$.
\end{proposition}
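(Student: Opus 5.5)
The plan is to expand the Nijenhuis operator
$$N_{\mathsf{J}}(v,w)=[\mathsf{J}v,\mathsf{J}w]_{\nabla}-\mathsf{J}[\mathsf{J}v,w]_{\nabla}-\mathsf{J}[v,\mathsf{J}w]_{\nabla}+\mathsf{J}^{2}[v,w]_{\nabla}$$
directly and use $\mathsf{J}^{2}=-Id$. First I will check that $N_{\mathsf{J}}$ is $C^{\infty}(M)$-bilinear. This follows from the Leibniz-type rule $[v,fw]_{\nabla}=f[v,w]_{\nabla}+(\pi v)(f)\,w$, which holds because both $[X,fY]$ and $\nabla_{X}(f\eta)$ obey the Leibniz rule. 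Combined with skew-symmetry, the anchor terms then cancel in the usual way.

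Once $N_{\mathsf{J}}$ is tensorial and skew, it vanishes if and only if it vanishes on the three types of pairs $(X,Y)$, $(\xi,\eta)$ and $(X,\eta)$. Each value splits into its $TM$ and $T^{*}M$ components, which gives exactly the six conditions of the statement.

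For each pair type I will write $\mathsf{J}X=HX+\beta X$ and $\mathsf{J}\xi=\alpha\xi+K\xi$, and use the bracket formulas
$$[Z+\zeta,W+\theta]_{\nabla}=[Z,W]+\nabla_{Z}\theta-\nabla_{W}\zeta .$$
Whenever a bracket of vector fields appears, I will replace it by $[Z,W]=\nabla_{Z}W-\nabla_{W}Z-T^{\nabla}(Z,W)$. I will also expand every term of the form $\nabla_{Z}(HW)$ as $(\nabla_{Z}H)W+H\nabla_{Z}W$, and treat $\nabla_{Z}(\beta W)$, $\nabla_{Z}(\alpha\theta)$ and $\nabla_{Z}(K\theta)$ in the same way.

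After these substitutions, all terms containing an undifferentiated $\nabla Y$, $\nabla\eta$, etc. should cancel. For this I will use the algebraic identities of the earlier characterization of generalized almost complex structures (Proposition 2.9.1 in the paper's numbering): $\alpha\beta=-(I+H^{2})$, $\beta\alpha=-(I+K^{2})$, $H\alpha=-\alpha K$ and $\beta H=-K\beta$. What survives is the set of covariant derivatives of $H,\alpha,\beta,K$ together with torsion terms. For instance, for $(\xi,\eta)$ we have $\mathsf{J}\xi=\alpha\xi+K\xi$ and $[\xi,\eta]_{\nabla}=0$. The tangent part is $[\alpha\xi,\alpha\eta]$ minus the $TM$-components of $\mathsf{J}$ applied to $\nabla_{\alpha\xi}\eta-\nabla_{\alpha\eta}\xi$, and the $\nabla\eta$ terms cancel, leaving $(\nabla_{\alpha\xi}\alpha)\eta-(\nabla_{\alpha\eta}\alpha)\xi-T^{\nabla}(\alpha\xi,\alpha\eta)$. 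The cotangent part similarly reduces to $(\nabla_{\alpha\xi}K)\eta-(\nabla_{\alpha\eta}K)\xi$.

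The main obstacle is the bookkeeping in the $(X,Y)$ and $(X,\eta)$ cases. There, four brackets each produce several terms, and the cancellation of the non-tensorial pieces requires a careful pairing. A representative example is the terms $H^{2}\nabla_{X}Y$ and $\alpha\beta\nabla_{X}Y$, which combine through $\alpha\beta+H^{2}=-I$ to cancel the $-[X,Y]$ coming from $\mathsf{J}^{2}$. To keep this under control, I will first compute $[\mathsf{J}X,\mathsf{J}Y]_{\nabla}$, $[\mathsf{J}X,Y]_{\nabla}$ and $[X,\mathsf{J}Y]_{\nabla}$ separately in block form. Only afterwards will I apply $\mathsf{J}$ and collect terms, grouping them first by the undifferentiated argument ($\nabla Y$, $\nabla X$, $\nabla\eta$) and only then by torsion. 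I will also use skew-symmetry in the $(X,Y)$ case to write the result in the antisymmetrized form shown in the statement.
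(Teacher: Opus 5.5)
Your route is the paper's: expand $N^{[\cdot,\cdot]_{\nabla}}_{\mathsf{J}}$ blockwise, replace Lie brackets by $\nabla_{Z}W-\nabla_{W}Z-T^{\nabla}(Z,W)$, and cancel the non-tensorial terms with $\alpha\beta=-(I+H^{2})$, $\beta\alpha=-(I+K^{2})$, $H\alpha=-\alpha K$, $\beta H=-K\beta$. The paper displays only the tangent part for $(\xi,\eta)$, which is your sample computation, and takes tensoriality for granted. Your Leibniz-rule argument $[v,fw]_{\nabla}=f[v,w]_{\nabla}+(\pi v)(f)\,w$, together with skew-symmetry, correctly supplies that reduction; it works for any endomorphism $\mathsf{J}$.

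The step that fails is the final matching. Done honestly, your bookkeeping does not give ``exactly the six conditions of the statement'', because (1) and (2) are misprinted; the paper's proof writes out only (3) and does not catch this.

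In the $(X,Y)$ case, $H^{2}+\alpha\beta=-I$ cancels only the $-(\nabla_{X}Y-\nabla_{Y}X)$ part of $\mathsf{J}^{2}[X,Y]_{\nabla}=-(\nabla_{X}Y-\nabla_{Y}X)+T^{\nabla}(X,Y)$. So in the tangent part $T^{\nabla}(X,Y)$ enters with a $+$ sign, not $-$.

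In the cotangent part, the $\beta$-terms come from applying $-\beta$ to
\[
[HX,Y]+[X,HY]=\nabla_{HX}Y-\nabla_{HY}X+(\nabla_{X}H)Y-(\nabla_{Y}H)X+H(\nabla_{X}Y-\nabla_{Y}X)-T^{\nabla}(HX,Y)-T^{\nabla}(X,HY).
\]
Hence the correct term is $-\beta\big((\nabla_{X}H)Y-(\nabla_{Y}H)X\big)$. This is consistent with the $-H(\cdots)$ in (1) and with the $+\beta\big(T^{\nabla}(HX,Y)+T^{\nabla}(X,HY)\big)$ in (2). For instance, for $a\mathsf{J}_{1,J_{0}}+b\mathsf{J}_{g_{0}}$ on $\mathbb{R}^{2}$ with $\nabla=d$, one finds $N(\partial_{1},\partial_{2})|_{T^{*}M}=b\,da$, while the printed right-hand side gives $-b\,da$. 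Conditions (3)--(6) do come out as printed, reading the last term of (5) as $+H\big(T^{\nabla}(X,\alpha\eta)\big)$.

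The sign in (1) actually falsifies the equivalence when $T^{\nabla}\neq 0$. On $\mathbb{R}^{2n}$ take $\mathsf{J}=\mathsf{J}_{1,J_{0}}$ and the connection with $\nabla_{\partial_{1}}\partial_{j}=J_{0}\partial_{j}$ and $\nabla_{\partial_{i}}\partial_{j}=0$ for $i\neq 1$. Then $\nabla J_{0}=0$ and $T^{\nabla}(\partial_{1},\partial_{j})=J_{0}\partial_{j}\neq 0$ for $j\neq 1$. A direct check gives $N^{[\cdot,\cdot]_{\nabla}}_{\mathsf{J}}\equiv 0$, since its $(X,Y)$ tangent part is the classical Nijenhuis tensor of $J_{0}$. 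Yet the printed right-hand side of (1) equals $-2T^{\nabla}(X,Y)\neq 0$.

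So your argument proves the proposition with these two signs corrected. For torsion-free $\nabla$, as in the later Levi-Civita applications, (1) is unaffected, but (2) still needs the fix.
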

\begin{proof}
Recall that a weak generalized almost complex structure is said to be $[\cdot ,\cdot ]_{\nabla } $-integrable if and only if its Nijenhuis tensor vanishes identically, i.e.
$$N_{\mathsf{J}}^{[\cdot ,\cdot ]_{\nabla}}(X+\xi ,Y+\eta )=0 \hspace{0.4cm} \forall (X+\xi ) ,(Y+\eta ) \in C^{\infty}(E).$$
Carrying out the computations using Definition $2.8$ with $\mathsf{J}$ and the bracket $[\cdot ,\cdot ]_{\nabla }$ yields the six conditions stated above. For example, we now compute condition $(3)$: 
$$N_{\mathsf{J}}^{[\cdot ,\cdot ]_{\nabla}}(\xi ,\eta )\big|_{C^{\infty}(TM)}=([\mathsf{J}\xi,\mathsf{J}\eta]_{\nabla}- \mathsf{J}[\mathsf{J}\xi ,\eta ]_{\nabla}-\mathsf{J}[\xi ,\mathsf{J}\eta ]_{\nabla}-[\xi ,\eta ]_{\nabla})\big|_{C^{\infty}(TM)}=$$
$$=([\alpha \xi +K\xi ,\alpha \eta +K \eta]_{\nabla}- \mathsf{J}[\alpha \xi +K\xi ,\eta]_{\nabla}-\mathsf{J}[\xi , \alpha \eta +K\eta]_{\nabla})\big| _{C^{\infty}(TM)}=$$
$$=([\alpha \xi ,\alpha \eta ]+\nabla _{\alpha \xi }K\eta -\nabla _{\alpha \eta}K\xi -\mathsf{J}\nabla _{\alpha \xi }\eta -\mathsf{J}\nabla _{\alpha \eta }\xi )\big| _{C^{\infty}(TM)}=$$
$$=(\nabla _{\alpha \xi }\alpha )(\eta)-(\nabla _{\alpha \eta }\alpha)(\xi )-T^{\nabla}(\alpha \xi ,\alpha \eta ).$$
\end{proof}

\begin{lemma}
Let $\nabla$ be an affine connection on $M$ and let $\mathsf{J}=\left( \begin{matrix} H && \alpha \\ \beta && K \end{matrix} \right) $ be a weak generalized almost complex structure on $M$. Let us take local coordinates $(U,(x^{1},\dots ,x^{n}))$ such that:
$$H\partial _{i}=H^{j}_{i}\partial _{j}, \hspace{0.4cm} \alpha dx^{i}=\alpha ^{ij}\partial_{j}, \hspace{0.4cm}
\beta \partial _{i}=\beta _{ij} dx^{j}, \hspace{0.4cm}
K dx^{i}=K_{j}^{i}dx^{j}.$$
Then $\mathsf{J}$ is $[\cdot ,\cdot ]_{\nabla}$-integrable if and only if the following conditions are met for all $i,j,k \in \{1, \dots , dim_{\mathbb{R}}M\}$:

\begin{equation} \label{eq:1nabla}
\begin{aligned}
(N_{\mathsf{J}}^{[\cdot ,\cdot]_{\nabla}}(\partial _{i},\partial _{j})\big|_{C^{\infty}(TM)})^{k}
&=(H_{i}^{p}(\partial _{p}H_{j}^{k}+\Gamma _{ps}^{k}H_{j}^{s}-\Gamma_{pj}^{s}H_{s}^{k})-H_{j}^{p}(\partial _{p}H_{i}^{k}+ \\ 
&\quad +\Gamma _{ps}^{k}H_{i}^{s}- \Gamma_{pi}^{s}H_{s}^{k}) -H_{k}^{l}(\partial _{i}H_{j}^{l}+\Gamma_{is}^{l}H_{j}^{s}+ \\ 
&\quad - \Gamma _{ij}^{s}H_{s}^{l}-\partial_{j}H_{i}^{l} -\Gamma_{js}^{l}H_{i}^{s} +\Gamma_{ji}^{s}H_{s}^{l}) -\alpha ^{lk}(\partial _{i}\beta _{jl}+ \\ 
&\quad -\Gamma _{ij}^{q}\beta _{ql}-\Gamma _{il}^{q}\beta _{jq}-\partial _{j}\beta _{il}+\Gamma _{ji}^{q}\beta _{ql}+\Gamma _{jl}^{q}\beta _{iq})+ \\ 
&\quad -H_{i}^{l}H_{j}^{p}(\Gamma _{lp}^{k}-\Gamma_{pl}^{k})-(\Gamma_{ij}^{k}-\Gamma _{ji}^{k})+ H_{m}^{k}(H_{i}^{p}(\Gamma_{pj}^{m}+\\ 
&\quad -\Gamma _{jp}^{m})+H_{j}^{l}(\Gamma _{il}^{m}-\Gamma _{li}^{m})))\partial _{k}=0,
\end{aligned}
\end{equation}

\begin{equation} \label{eq:2nabla}
\begin{aligned}
(N_{\mathsf{J}}^{[\cdot ,\cdot ]_{\nabla}}(\partial _{i},\partial _{j})\big|_{C^{\infty}(T^{*}M)})^{k}
&=(H_{i}^{p}(\partial _{p}\beta _{jk}-\Gamma _{pj}^{q}\beta _{qk}-\Gamma_{pk}^{q}\beta_{jq})-H_{j}^{p}(\partial_{p}\beta _{ik}+ \\ 
&\quad -\Gamma_{pi}^{q}\beta _{qk} -\Gamma _{pk}^{q}\beta _{iq}) + \beta _{lk}(\partial _{i}H_{j}^{l}+\Gamma _{is}^{l}H_{j}^{s}+ \\ 
&\quad -\Gamma _{ij}^{s}H_{s}^{l}-\partial _{j}H_{i}^{l}-\Gamma _{js}^{l}H_{i}^{s}+\Gamma _{ji}^{s}H_{s}^{l})-K_{k}^{l}(\partial _{i}\beta _{jl}+\\ 
&\quad -\Gamma _{ij}^{q}\beta _{ql}-\Gamma _{il}^{q}\beta _{jq}-\partial _{j}\beta _{il}+\Gamma _{ji}^{q}\beta _{ql}+\Gamma _{jl}^{q}\beta _{iq}) + \\ 
&\quad + \beta _{mk}(H_{i}^{p}(\Gamma _{pj}^{m}-\Gamma _{jp}^{m})+H_{j}^{l}(\Gamma_{il}^{m}-\Gamma _{li}^{m})))dx^{k}=0,
\end{aligned}
\end{equation}

\begin{equation} \label{eq:3nabla}
\begin{aligned}
(N_{\mathsf{J}}^{[\cdot ,\cdot ]_{\nabla}}(dx^{i},dx^{j})\big|_{C^{\infty}(TM)})^{k}
&=(\alpha ^{ip} (\partial _{p}\alpha ^{jk}+\Gamma _{ps}^{j}\alpha ^{sk}+ \Gamma _{ps}^{k}\alpha ^{js})-\alpha ^{jp}(\partial _{p}\alpha^{ik}+ \\ 
&\quad +\Gamma _{ps}^{i}\alpha ^{sk}+\Gamma _{ps}^{k}\alpha ^{is})-\alpha ^{il}\alpha ^{jm}(\Gamma _{lm}^{k}-\Gamma _{ml}^{k}))\partial _{k}=0,
\end{aligned}
\end{equation}

\begin{equation} \label{eq:4nabla}
\begin{aligned}
(N_{\mathsf{J}}^{[\cdot ,\cdot ]_{\nabla}}(dx^{i},dx^{j})\big|_{C^{\infty}(T^{*}M)})^{k}
&=(\alpha ^{ip}(\partial _{p}K _{j}^{k}+\Gamma _{pq}^{k}K_{j}^{q}-\Gamma _{pj}^{q}K_{q}^{k})-\alpha ^{jp}(\partial _{p}K_{i}^{k}+\\ 
&\quad +\Gamma _{pq}^{k}K_{i}^{q}-\Gamma _{pi}^{q}K_{q}^{k}))dx^{k}=0,
\end{aligned}
\end{equation}

\begin{equation} \label{eq:5nabla}
\begin{aligned}
(N_{\mathsf{J}}^{[\cdot ,\cdot ]_{\nabla}}(\partial _{i},dx^{j})\big|_{C^{\infty}(TM)})^{k}
&=(H_{i}^{p}(\partial _{p}\alpha ^{jk}+\Gamma _{ps}^{j}\alpha ^{sk}+\Gamma _{ps}^{k}\alpha ^{js})-\alpha ^{jp}(\partial _{p}H_{i}^{k}+ \\ 
&\quad +\Gamma _{ps}^{k}H_{i}^{s}-\Gamma _{ij}^{s}\alpha ^{sl})-H_{l}^{k}(\partial _{i}\alpha ^{jl}+\Gamma _{is}^{j}\alpha ^{sl}+\\ 
&\quad +\Gamma _{is}^{l}\alpha ^{js}) -\alpha ^{lk}(\partial _{i}K_{j}^{l}+ \Gamma _{iq}^{l}K_{i}^{q}-\Gamma _{ij}^{q}K_{q}^{l})+\\ 
&\quad -H_{i}^{l}\alpha ^{jp}(\Gamma_{lp}^{k}-\Gamma _{pl}^{k})+H_{m}^{k}(\alpha ^{jp}(\Gamma_{ip}^{m}-\Gamma _{pi}^{m})))\partial _{k}=0,
\end{aligned}
\end{equation}

\begin{equation} \label{eq:6nabla}
\begin{aligned}
(N_{\mathsf{J}}^{[\cdot ,\cdot ]_{\nabla}}(\partial _{i},dx^{j})\big|_{C^{\infty}(T^{*}M)})^{k}
&=(H_{i}^{p}(\partial _{p}K_{j}^{k}+\Gamma _{pq}^{k}K_{j}^{q}-\Gamma _{pj}^{q}K_{q}^{k})-\alpha ^{jp}(\partial _{p}\beta _{ik}+ \\ 
&\quad -\Gamma _{pi}^{q}\beta _{qk}-\Gamma _{pk}^{q}\beta _{iq})-\beta _{lk}(\partial _{i}\alpha ^{jl}+\Gamma_{is}^{j}\alpha ^{sl}+ \\ 
&\quad +\Gamma _{is}^{l}\alpha ^{js})-K_{k}^{l}(\partial _{i}K_{j}^{l}+\Gamma _{iq}^{l}K_{j}^{q}-\Gamma _{ij}^{q}K_{q}^{l})+ \\ 
&\quad +\beta _{lk}(\alpha ^{jm}(\Gamma _{im}^{l}-\Gamma _{mi}^{l})))dx^{k}=0.
\end{aligned}
\end{equation}
\end{lemma}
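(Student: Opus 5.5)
The plan is to specialise the six coordinate-free conditions of the previous Proposition to coordinate frames. Since $N_{\mathsf{J}}^{[\cdot,\cdot]_{\nabla}}$ is a tensor (Remark 2.11), it vanishes identically if and only if it vanishes on every pair of local frame elements of $E$. On a chart $(U,(x^{1},\dots,x^{n}))$ these pairs are $(\partial_i,\partial_j)$, $(dx^i,dx^j)$ and $(\partial_i,dx^j)$. The remaining mixed pair $(dx^i,\partial_j)$ is recovered from $(\partial_j,dx^i)$ by skew-symmetry of $[\cdot,\cdot]_{\nabla}$, which makes $N$ skew. So it suffices to take $X=\partial_i$, $Y=\partial_j$, $\xi=dx^i$, $\eta=dx^j$ in the previous Proposition, and to extract the $\partial_k$- and $dx^k$-components. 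This gives the six equations in the same order.

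First I will record the local expressions of the covariant derivatives of the four blocks:
\[
(\nabla_{\partial_p}H)\partial_j=(\partial_pH^k_j+\Gamma^k_{ps}H^s_j-\Gamma^s_{pj}H^k_s)\partial_k,
\qquad
(\nabla_{\partial_p}\beta)\partial_j=(\partial_p\beta_{jk}-\Gamma^q_{pj}\beta_{qk}-\Gamma^q_{pk}\beta_{jq})\,dx^k .
\]
The derivatives of $\alpha$ (a $(2,0)$-tensor, giving two $+\Gamma$ terms) and of $K$ (a $(1,1)$-tensor) are treated analogously, using $\nabla_{\partial_p}dx^j=-\Gamma^j_{ps}dx^s$. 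For the torsion I will use $T^{\nabla}(\partial_l,\partial_m)=(\Gamma^k_{lm}-\Gamma^k_{ml})\partial_k$, since coordinate fields commute. Directional derivatives are then expanded by $C^\infty$-linearity, e.g. $\nabla_{H\partial_i}=H^p_i\nabla_{\partial_p}$ and $\nabla_{\alpha dx^j}=\alpha^{jp}\nabla_{\partial_p}$. Torsion arguments are expanded bilinearly, e.g. $T^{\nabla}(H\partial_i,H\partial_j)=H^l_iH^p_j(\Gamma^k_{lp}-\Gamma^k_{pl})\partial_k$.

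Next I substitute these into each of the six equations and apply the outer endomorphisms $H,\alpha,\beta,K$ to the resulting vectors or covectors, contracting indices. For example, $H$ applied to $V^l\partial_l$ gives $H^k_lV^l\partial_k$, and $\beta$ applied to $V^m\partial_m$ gives $\beta_{mk}V^m dx^k$. Collecting the coefficient of $\partial_k$ or $dx^k$ then yields equations \eqref{eq:1nabla}--\eqref{eq:6nabla}. The converse direction is automatic: by tensoriality, vanishing on frames implies vanishing everywhere, so the conditions are necessary and sufficient.

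No conceptual obstacle is expected. The main difficulty is bookkeeping, particularly in the mixed equations \eqref{eq:5nabla} and \eqref{eq:6nabla}. There the index conventions for $\alpha dx^i=\alpha^{ij}\partial_j$ and $Kdx^i=K^i_jdx^j$ (first index as the input) must be handled carefully, because the Christoffel contractions act on both slots. I will check each term against the coordinate-free expression term by term, also verifying the index placement in the written formulas, and treat any discrepancy as a typographical slip to be corrected consistently.
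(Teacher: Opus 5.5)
Your plan matches the paper's proof, which likewise records the coordinate expressions of $\nabla H$, $\nabla\alpha$, $\nabla\beta$, $\nabla K$ and substitutes them into the six conditions of the preceding Proposition; your torsion formula and your reduction to frame pairs via tensoriality and skew-symmetry of $N_{\mathsf{J}}^{[\cdot,\cdot]_{\nabla}}$ (which also covers $(dx^i,\partial_j)$) make explicit what the paper leaves implicit. One caution, since you plan to check terms against that Proposition rather than against the definition of $N$: expanding the definition directly gives $+T^{\nabla}(X,Y)$ in the $TM$-part of $N(X,Y)$ and $-\beta\big((\nabla_XH)Y-(\nabla_YH)X\big)$ in its $T^{*}M$-part, so the Proposition's opposite signs for these terms, which carry over to \eqref{eq:1nabla} and \eqref{eq:2nabla}, are actual errors that your check would not flag as index slips.
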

\begin{proof}
Observe that, in local coordinates, the following equations hold:
$$((\nabla _{\partial _{p}}H)(\partial _{j}))^{k}=(\nabla _{p}H)_{j}^{k}=\partial _{p}H_{j}^{k}+\Gamma _{ps}^{k}H_{j}^{s}-\Gamma _{pj}^{s}H_{s}^{k},$$
$$((\nabla _{\partial _{p}}\alpha)(dx^{j}))^{k}=(\nabla _{p}\alpha)^{jk}=\partial _{p}\alpha ^{jk}+\Gamma _{ps}^{j}\alpha ^{sk}+\Gamma _{ps}^{k}\alpha ^{js},$$
$$((\nabla _{\partial _{p}}\beta)(\partial _{j}))^{k}=(\nabla _{p}\beta )_{jk}=\partial _{p}\beta _{jk}-\Gamma _{pj}^{q}\beta _{qk}-\Gamma _{pk}^{q}\beta _{jq},$$
$$(\nabla _{\partial _{p}}K)(dx^{j}))^{k}=(\nabla _{p}K)_{j}^{k}=\partial _{p}K_{j}^{k}+\Gamma _{pq}^{k}K_{j}^{q}-\Gamma _{pj}^{q}K_{q}^{k}.$$
Substituting these equations into the conditions of the preceding proposition yields the desires conclusion.
\end{proof}
\begin{corollary}
Let $\nabla$ be an affine connection on $M$ and let $\mathsf{J}=\left( \begin{matrix} H && \alpha \\ \beta && K \end{matrix} \right) $ be a weak generalized almost complex structure on $M$. A sufficient condition for the $[\cdot ,\cdot ]_{\nabla} $-integrability of $\mathsf{J}$ is that the connection $\nabla $ be symmetric, $\nabla H=$ \\ $=\nabla \alpha =0$, and that $\alpha $ be invertible.
\end{corollary}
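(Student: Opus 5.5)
The plan is to show that, under the stated hypotheses, every term in each of the six conditions of Proposition 3.1 vanishes separately. Looking at those conditions, each term contains either a covariant derivative of one of the blocks $H,\alpha,\beta,K$, or the torsion $T^{\nabla}$. If $\nabla$ is symmetric then $T^{\nabla}=0$, and by hypothesis $\nabla H=\nabla\alpha=0$. So the proof reduces to showing that $\nabla\beta=0$ and $\nabla K=0$ also follow from the hypotheses.

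To get these two facts I will use the algebraic relations of Proposition 2.8 together with the invertibility of $\alpha$. Throughout, $\nabla$ acts on $T^{*}M$ and on all the bundle maps $H,\alpha,\beta,K$ through the induced (dual) connection. With this convention the Leibniz rule $\nabla_X(A\circ B)=(\nabla_X A)\circ B+A\circ(\nabla_X B)$ holds for compositions of such bundle maps.

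\begin{itemize}
\item Since $\alpha:T^{*}M\to TM$ is invertible, differentiating $\alpha\circ\alpha^{-1}=Id$ and using $\nabla Id=0$ gives
$$\nabla_X(\alpha^{-1})=-\alpha^{-1}(\nabla_X\alpha)\alpha^{-1}=0.$$
\item The relation $H\alpha=-\alpha K$ yields $K=-\alpha^{-1}H\alpha$. By Leibniz, $\nabla_X K$ is a sum of terms each containing $\nabla\alpha^{-1}$, $\nabla H$ or $\nabla\alpha$, so $\nabla K=0$.
\item The relation $\alpha\beta=-(I+H^{2})$ yields $\beta=-\alpha^{-1}(I+H^{2})$. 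Hence
$$\nabla_X\beta=-(\nabla_X\alpha^{-1})(I+H^{2})-\alpha^{-1}\big((\nabla_XH)H+H(\nabla_XH)\big)=0.$$
\end{itemize}

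With $T^{\nabla}=0$ and $\nabla H=\nabla\alpha=\nabla\beta=\nabla K=0$, I then substitute into conditions (2)--(7) of Proposition 3.1 one at a time. Each term is of the form $A((\nabla_{\cdot}B)\cdot)$, $(\nabla_{\cdot}B)\cdot$ or $A(T^{\nabla}(\cdot,\cdot))$, so every term vanishes. In condition (6) the expression printed as $H^{T^{\nabla}}(X,\alpha\eta)$ I read as $H(T^{\nabla}(X,\alpha\eta))$, which also vanishes. Hence $N_{\mathsf{J}}^{[\cdot,\cdot]_{\nabla}}=0$, which is the definition of $[\cdot,\cdot]_{\nabla}$-integrability. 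Equivalently, one can substitute $\Gamma^{k}_{ij}=\Gamma^{k}_{ji}$ and the vanishing covariant derivatives into the local formulae of Lemma 3.1.1.

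The only step needing some care is deriving $\nabla\beta=0$ and $\nabla K=0$. This requires that the connection on the mixed bundles $\mathrm{Hom}(T^{*}M,TM)$, $\mathrm{Hom}(TM,T^{*}M)$ and $\mathrm{Hom}(T^{*}M,T^{*}M)$ be the one induced by $\nabla$. This matches the coordinate formulae in the proof of Lemma 3.1.1, and it is what makes the Leibniz rule for compositions valid. Invertibility of $\alpha$ is used only here: it lets us express $\beta$ and $K$ in terms of $\alpha^{-1}$ and $H$, so that their parallelism follows from that of $\alpha$ and $H$.
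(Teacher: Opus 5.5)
Your proof is correct and follows the same route as the paper. The paper also uses invertibility of $\alpha$ with the algebraic relations of Proposition 2.9.2 to get $\nabla K=\nabla\beta=0$, and then notes that every term in the six conditions of Proposition 3.0.1 vanishes once the torsion is zero. You simply write out the Leibniz-rule computation via $K=-\alpha^{-1}H\alpha$ and $\beta=-\alpha^{-1}(I+H^{2})$, which the paper leaves implicit.
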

\begin{proof}
If $\alpha $ is invertible and $\nabla \alpha =\nabla H=0$, then using Proposition 2.9.2 we have $\nabla K=\nabla \beta =0$. Therefore, all the equations in the preceding proposition are satisfied.
\end{proof}
\subsection{$[\cdot ,\cdot ]_{\nabla ^{LC}}$-integrability of spherical combinations}
We now attempt to use the equations just derived to study the $[\cdot ,\cdot ]_{\nabla ^{LC}}$-integrability of certain spherical combinations on $\mathbb{S}^{6}$, obtained from the strictly nearly K\"ahler structure $(\mathbb{S}^{6},J,g)$ induced by the octonions product, where $\nabla ^{LC}$ denotes the Levi-Civita connection associated with $g$.
\begin{lemma}
Let $(\mathbb{S}^{6},J,g)$ be the six-dimensional sphere with its usual nearly K\"ahler structure and let $\nabla ^{LC}$ be the Levi-Civita connection. The weak generalized almost complex structure $\mathsf{J}_{g}$ is $[\cdot ,\cdot ]_{\nabla ^{LC}}$-integrable, whereas $\mathsf{J}_{1,J}$ and $\mathfrak{J}_{\omega}$ are not.
\end{lemma}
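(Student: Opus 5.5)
For $\mathsf{J}_{g}$ we have $H=0$, $K=0$, $\alpha=-g^{-1}$ and $\beta=g$. The Levi-Civita connection is symmetric and metric, so $\nabla^{LC}g=0$ and hence $\nabla^{LC}g^{-1}=0$. Moreover $\alpha$ is invertible. Corollary 3.1.2 therefore applies directly, with $\nabla H=0$ trivially and $\nabla\alpha=0$, and it gives the $[\cdot,\cdot]_{\nabla^{LC}}$-integrability of $\mathsf{J}_{g}$. As a consistency check, every term in the six conditions of Proposition 3.1.1 contains either $\nabla H$, $\nabla K$, $\nabla\alpha$, $\nabla\beta$, a torsion term, or a factor $H$ or $K$, so all of them vanish.

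For $\mathsf{J}_{1,J}$ we have $\alpha=\beta=0$, $H=J$ and $K=J^{*}$. Since $T^{\nabla^{LC}}=0$, condition (1) of Proposition 3.1.1 reduces to
$$(\nabla_{JX}J)Y-(\nabla_{JY}J)X-J\big((\nabla_{X}J)Y-(\nabla_{Y}J)X\big)=0.$$
On a nearly K\"ahler manifold $(\nabla_{X}J)X=0$, so $\nabla J$ is skew in its two arguments, and one also has $(\nabla_{JX}J)Y=-J(\nabla_{X}J)Y$. With these identities the expression becomes $-4J(\nabla_{X}J)Y$; equivalently it equals the ordinary Nijenhuis tensor $N_{J}$, which for a nearly K\"ahler structure is $-4J(\nabla_XJ)Y$. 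Because $(\mathbb{S}^{6},J,g)$ is strictly nearly K\"ahler, $\nabla J\neq 0$, so this expression does not vanish.

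To make the non-vanishing concrete in the paper's style, I would use the coordinate formula (\ref{eq:1nabla}) with $i=1$ and a suitable $j$. This needs the known components $J^{k}_{1}$ from (\ref{eq:1indicebasso}), the components $J^{1}_{k}=-g_{kk}J^{k}_{1}$, and the Christoffel symbols of the diagonal metric, for example $\Gamma^{1}_{22}=-\sin u^{1}\cos u^{1}$ and $\Gamma^{2}_{12}=\cot u^{1}$. The goal is a single component that is visibly nonzero at a generic point. This is the main obstacle: the full $J$ is not written out, so I must pick the pair $(i,j)$ and the output index $k$ so that only the recorded entries $J^{k}_{1}$ and $J^{1}_{k}$ appear. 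Working with the output component $k=1$ and the inputs $\partial_{1},\partial_{j}$ seems most promising. If that fails, I fall back on the intrinsic argument above, which only needs that $\nabla^{LC}J\neq0$. That fact follows from the octonionic identity $(\nabla_{X}J)Y=X\times Y+g(X\times Y,p)\,p$, whose right-hand side is nonzero for orthonormal $X,Y$ with $Y\neq \pm JX$.

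For $\mathfrak{J}_{\omega}$ we have $H=K=0$, $\alpha=-\omega^{-1}$ and $\beta=\omega$. The torsion vanishes, so condition (2) reduces to $(\nabla_{X}\omega)(Y,\cdot)$-type terms: with $H=K=0$ one term survives and reads $-K(\ldots)$, which is zero. I therefore look at condition (6), which becomes $-(\nabla_{\alpha\eta}\beta)X-\beta((\nabla_{X}\alpha)\eta)$. Using $\nabla\omega^{-1}=-\omega^{-1}(\nabla\omega)\omega^{-1}$, this equals $(\nabla_{\omega^{-1}\eta}\omega)X-(\nabla_{X}\omega)(\omega^{-1}\eta)$ up to signs. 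Since $\omega=g(J\cdot,\cdot)$, we have $\nabla\omega=g((\nabla J)\cdot,\cdot)$, which is totally skew and nonzero on strictly nearly K\"ahler $\mathbb{S}^{6}$. The two terms then combine into $2(\nabla_{Z}\omega)(X,\cdot)$ with $Z=\omega^{-1}\eta$ arbitrary, and this is nonzero. Hence $\mathfrak{J}_{\omega}$ is not integrable. This can again be cross-checked in coordinates via (\ref{eq:6nabla}) using the $J_1^k$ components.
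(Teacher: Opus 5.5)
Your proof is correct, and its skeleton is the paper's. You handle $\mathsf{J}_{g}$ by the corollary, and you exclude $\mathsf{J}_{1,J}$ and $\mathfrak{J}_{\omega}$ by contradiction with $\nabla^{LC}J\neq 0$. The difference is that the paper only asserts that integrability would force $\nabla^{LC}J=0$ and $\nabla^{LC}\omega=0$ ``by Proposition 3.0.1''. You instead identify the failing components and compute them.

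For $\mathsf{J}_{1,J}$ you fill a real gap in the paper's argument. The $TM$-part of $N(X,Y)$ is just the classical Nijenhuis tensor $N_{J}$, since on vector fields the bracket is the Lie bracket whatever $\nabla$ is. In general, $N_{J}=0$ says nothing about $\nabla J$. What turns non-integrability into $\nabla J\neq 0$ is the nearly K\"ahler identity $N_{J}=-4J(\nabla_{X}J)Y$. You derive it correctly from $(\nabla_{X}J)Y=-(\nabla_{Y}J)X$ and $(\nabla_{X}J)J=-J(\nabla_{X}J)$.

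For $\mathfrak{J}_{\omega}$, your reduction of condition (6) to $(\nabla_{Z}\omega)(X,\cdot)-(\nabla_{X}\omega)(Z,\cdot)$, with $Z=\omega^{-1}\eta$, is exactly right. Here the paper's claimed implication actually holds for any torsion-free connection. If this expression vanishes, then $(Z,X,W)\mapsto(\nabla_{Z}\omega)(X,W)$ is symmetric in the first two slots and skew in the last two, hence zero. Your total-skewness step is the nearly K\"ahler shortcut to the same conclusion.

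A few small fixes:

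\begin{enumerate}
\item Drop the hedge ``up to signs'' in the $\mathfrak{J}_{\omega}$ step. The relative sign is decisive: with a plus sign the two terms would cancel by total skewness. The sign you wrote is the correct one.
\item The octonionic formula should read $(\nabla_{X}J)Y=X\times Y-g(X\times Y,p)\,p$, the tangential part of $X\times Y$. With your sign the right-hand side is in general not tangent, although the non-vanishing conclusion is unaffected.
\item Your remark on condition (2) is garbled but harmless. For $H=K=0$ and zero torsion that condition vanishes identically.
\item The coordinate fallback is unnecessary, since the intrinsic argument is already complete.
\end{enumerate}
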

\begin{proof}
Suppose, for contradiction, that $\mathsf{J}_{1,J}$ and $\mathfrak{J}_{\omega}$ were $[\cdot ,\cdot ]_{\nabla^{LC}}$-integrable. Then it would follow from Proposition 3.0.1 that $\nabla ^{LC}J=0$ and $\nabla ^{LC}\omega =0$. Both conclusions contradict the fact that the structure is strictly nearly K\"ahler.
\\ Let us now consider $\mathsf{J}_{g}$. Observing that $\nabla ^{LC}g=0$ and, in this case, $H=K=0$, $\alpha =g^{-1}$, and $\beta =g$, it follows from the previous corollary that $\mathsf{J}_{g}$ is $[\cdot ,\cdot ]_{\nabla ^{LC}}$-integrable.
\end{proof}
\begin{oss}
\cite{N1} Given any metric $g$, the induced weak generalized almost complex structure $\mathsf{J}_{g}$ is always $[\cdot ,\cdot ]_{\nabla ^{LC}}$-integrable.
\end{oss}
\begin{lemma}
Let $(\mathbb{S}^{6},J,g)$ be the six-dimensional sphere with its usual nearly K\"ahler and let $\nabla ^{LC}$ be the Levi-Civita connection. Then there is not any spherical combination $\mathsf{J}=a\mathsf{J}_{1,J}+b\mathsf{J}_{g}$ with $a,b \in C^{\infty}(\mathbb{S}^{6})$, and $a^{2}+b^{2}=1$, $a \not\equiv 0$ such that the weak generalized almost complex structure $\mathsf{J}$ is $[\cdot ,\cdot ]_{\nabla ^{LC}}$-integrable.
\end{lemma}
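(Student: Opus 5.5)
The plan is to write $\mathsf{J}=a\mathsf{J}_{1,J}+b\mathsf{J}_{g}$ in block form and test it against the integrability conditions of Proposition 3.0.1, arguing by contradiction. From the examples of Section 2.2 the blocks are
$$H=aJ,\qquad \alpha=-b\,g^{-1},\qquad \beta=b\,g,\qquad K=aJ^{*}.$$
Since $\nabla=\nabla^{LC}$ is torsion free, every $T^{\nabla}$ term drops out. Moreover $\nabla^{LC}g=0$, so
$$\nabla\alpha=-db\otimes g^{-1},\qquad \nabla\beta=db\otimes g,\qquad \nabla H=da\otimes J+a\nabla J,\qquad \nabla K=da\otimes J^{*}+a\nabla J^{*}.$$

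\textbf{Step 1 (condition (3) forces $b$ to be locally constant where $b\neq 0$).} Condition (3) becomes
$$b\,\big(db(g^{-1}\xi)\,g^{-1}\eta-db(g^{-1}\eta)\,g^{-1}\xi\big)=0 .$$
On the open set $U=\{b\neq 0\}$, suppose $db_p\neq 0$ at some $p$. Choose $\xi$ with $db(g^{-1}\xi)\neq 0$ and $\eta$ with $g^{-1}\eta$ linearly independent from $g^{-1}\xi$; this is possible since $\dim=6\ge 2$. The expression is then nonzero, a contradiction. Hence $db=0$ on $U$. Since $a^{2}+b^{2}=1$, $a$ is also locally constant on $U$.

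\textbf{Step 2 (condition (4) on $U$).} On a connected component of $U$, $a$ and $b$ are constants. Condition (4) reduces to
$$ab\big((\nabla_{X}J^{*})\eta-(\nabla_{Y}J^{*})\xi\big)=0,\qquad X=g^{-1}\xi,\ Y=g^{-1}\eta .$$
Because $\nabla$ commutes with dualization and $g$ is parallel, and because $g$ identifies $J^{*}\eta$ with $-JY$, this is equivalent to
$$ab\big((\nabla_{X}J)Y-(\nabla_{Y}J)X\big)=2ab\,(\nabla_{X}J)Y=0,$$
where the last equality uses the nearly Kähler identity $(\nabla_XJ)Y=-(\nabla_YJ)X$. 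On $\mathbb{S}^{6}$ the structure is strictly nearly Kähler: $|(\nabla_XJ)Y|^{2}$ equals a positive constant for orthonormal $X,Y$ with $Y\perp JX$. Hence $\nabla J\neq 0$ at every point, which forces $a=0$ on every component of $U$.

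\textbf{Step 3 (the region where $b$ vanishes).} Let $V$ be the interior of $\{b=0\}$; then $U\cup V$ is dense. Since $a\not\equiv 0$, the open set $W=\{a\neq 0\}$ is nonempty. By Step 2, $W\cap U=\emptyset$, so $W$ meets $V$, and on $V\cap W$ we have $a=\pm1$ locally constant and $\alpha=\beta=0$. Condition (1) then reduces to
$$(\nabla_{JX}J)Y-(\nabla_{JY}J)X-J\big((\nabla_XJ)Y-(\nabla_YJ)X\big)=0 .$$
With zero torsion, the left-hand side is exactly the Nijenhuis tensor $N_J(X,Y)$. For a strictly nearly Kähler structure $N_J$ is a nonzero multiple of $J(\nabla_XJ)Y$, hence nowhere vanishing. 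This contradicts $N_J=0$ on $V\cap W$, so no such integrable $\mathsf{J}$ exists.

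\textbf{Main obstacle.} I expect the delicate points to be twofold. First, the identification in Step 2 of $(\nabla_XJ^{*})\eta$ with $-g((\nabla_XJ)Y,\cdot)$, including signs. Second, the pointwise non-vanishing of $\nabla J$ and $N_J$ on $\mathbb{S}^{6}$. If the abstract nearly Kähler argument is unclear, I would fall back on the explicit components: use $J\partial_1$ from \eqref{eq:1indicebasso} together with the Christoffel symbols of the diagonal metric, evaluate a single component of \eqref{eq:4nabla} or \eqref{eq:1nabla} (for instance $i=1$), and check that it is nonzero.
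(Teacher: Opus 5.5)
Your proof is correct. Its core coincides with the paper's: condition (3) forces $db=0$ on $\{b\neq 0\}$, and condition (4), combined with the nearly K\"ahler symmetry $(\nabla_XJ)Y=-(\nabla_YJ)X$, forces $ab\,\nabla J=0$.

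Where you differ is in how the local information is globalized. The paper first disposes of $b\equiv 0$ by citing Lemma 3.0.2. It then runs a clopen/connectedness argument on the components of $\{b\neq 0\}$ to show that $b$, and hence $a$, is a nonzero constant on all of $\mathbb{S}^{6}$. After that, (4) holds at every point and gives $\nabla J\equiv 0$, so the paper only needs $\nabla J\not\equiv 0$.

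You never globalize. Condition (4) kills $a$ on the open set $\{b\neq 0\}$. On the open set $\{a\neq 0\}$, which lies in the interior of $\{b=0\}$, you use condition (1); with $\alpha=\beta=0$, $H=\pm J$ and zero torsion this is exactly $N_J$. In effect this is a localized and more explicit version of Lemma 3.0.2, whose proof in the paper only asserts that $\nabla J=0$ would follow. Your identity $N_J(X,Y)=-4J(\nabla_XJ)Y$ is precisely what justifies that assertion.

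The price of working on arbitrary open sets is that you need pointwise non-vanishing of $\nabla J$ and of $N_J$. On $\mathbb{S}^{6}$ this is supplied by Gray's constant-type identity $|(\nabla_XJ)Y|^{2}=|X|^{2}|Y|^{2}-g(X,Y)^{2}-g(JX,Y)^{2}$. In exchange, your computation is coordinate-free rather than done in normal coordinates.

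Both routes can be shortened. Your Step 1 actually yields $b\,db=\frac{1}{2}d(b^{2})=0$ on all of $\mathbb{S}^{6}$, not just on $\{b\neq 0\}$. So $b^{2}$, and with it $a^{2}$, is constant on the connected sphere. This leaves three cases. If $b\equiv 0$, condition (1) applies. If $a$ and $b$ are both nonzero constants, condition (4) applies. The case $b^{2}\equiv 1$ forces $a\equiv 0$, which the hypothesis excludes.
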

\begin{proof}
Consider a point $m \in \mathbb{S}^{6}$ and choose normal coordinates in a neighbourhood of $m$. Observe that, in these normal coordinates, the Christoffel symbols of the Levi-Civita connection vanish at $m$ and the metric components coincide with the Kronecker delta at $m$, i.e.
$$\Gamma _{ij}^{k}=0, \hspace{0.2cm} g_{ij}=\delta _{ij} \hspace{0.2cm} \forall i,j,k \in \{1,\dots ,6\}.$$
Observe that, in matrix form, $\mathsf{J}$ is given by:
$$\mathsf{J}=\left( \begin{matrix} aJ && -bg^{-1} \\ 
bg && aJ^{*} \end{matrix} \right).$$
From Lemma 3.0.2, we know that $\mathsf{J}_{1,J}$ is not $[\cdot ,\cdot ]_{\nabla ^{LC}}$-integrable, therefore we may consider $b\not \equiv 0$.
Assume, for contradiction, that there exist $a,b$ with $b\not \equiv 0$, $a\not \equiv 0$, such that $\mathsf{J}$ is $[\cdot ,\cdot ]_{\nabla ^{LC}}$-integrable. Therefore, $\mathsf{J}$ satisfies the equations of Proposition 3.0.1, and in particular we may rewrite equation \eqref{eq:3nabla} in normal coordinates:
$$b(\partial _{i}(b\delta^{j}_{k}))-b(\partial _{j}(b\delta_{k}^{i}))=0 \hspace{0.2cm} \forall i,j,k \in \{1, \dots ,6\}.$$
We choose now $i=k\neq j$; we then obtain 
$$b(\partial _{j}b)=0 \hspace{0.2cm} \forall j.$$
It follows that on the open set $\{b\neq 0\}$ we have $\partial _{j}b=0$ for all $j$. Hence $b$ is locally constant on $\{b\neq 0\}$, and therefore on each connected component $U$ of $\{b \neq 0\}$ we have $b\equiv K$ for some constant $K\neq 0$. Let $U$ be such a component. By continuity of $b$, if $x_{n} \in U$ and $x_{n}\rightarrow p \in \overline{U}^{\mathbb{S}^{6}}$, then $b(x_{n})=K$ for all $n$, so $b(p)=K\neq 0$; hence $\overline{U}^{\mathbb{S}^{6}} \subset \{b \neq 0\}$. The set $\overline{U}^{\mathbb{S}^{6}}$ is connected, hence it is a connected subset of $\{b\neq 0\}$ containing $U$. By maximality of $U$ as a connected subset of $A$ we must have $\overline{U}^{\mathbb{S}^{6}}=U$. Therefore $U$ is closed (as well as open) in $\mathbb{S}^{6}$. If $\{b\neq 0\}$ is nonempty then some component $U$ is nonempty and hence clopen in $\mathbb{S}^{6}$; by connectedness of $\mathbb{S}^{6}$ we must have $U=\mathbb{S}^{6}$. Therefore, since $b \not \equiv 0$, it follows that $\partial _{j}b=0$ for all $j$ on all of $\mathbb{S}^{6}$. Then $b$ is globally constant, and consequently $a$ is also constant. Now we consider in normal coordinates equation \eqref{eq:4nabla}:
$$-bg^{ip}(\partial _{p}(aJ_{j}^{k}))+bg^{jp}(\partial _{p}(aJ_{i}^{k}))=0.$$
Since $a \not \equiv 0$, $b\not \equiv 0$ and are constant, we obtain:
$$-\partial _{i}J_{j}^{k}+\partial _{j}J_{i}^{k}=0.$$
As $(\mathbb{S}^{6},J,g)$ is nearly K\"ahler, it follows that $(\nabla _{X}J)X=0$ for all $X\in C^{\infty}(TM)$ and so 
$$(\nabla _{X}J)Y+(\nabla _{Y}J)X=0 \hspace{0.2cm} \forall X,Y \in C^{\infty}(TM),$$
or equivalently, in normal coordinates,
$$\partial _{i}J_{j}^{k}+\partial _{j}J_{i}^{k}=0 \hspace{0.2cm} \forall i,j,k$$
This yields a contradiction, which proves the claim.
\end{proof}
\begin{lemma}
Let $(\mathbb{S}^{6},J,g)$ be the six-dimensional sphere with its usual nearly K\"ahler and let $\nabla ^{LC}$ be the Levi-Civita connection. Then there is not any spherical combination $\mathsf{J}=a\mathsf{J}_{1,J}+c\mathfrak{J}_{\omega}$ with $a,c \in C^{\infty}(\mathbb{S}^{6})$, and $a^{2}+c^{2}=1$, such that the weak generalized almost complex structure $\mathsf{J}$ is $\nabla ^{LC}$-integrable.
\end{lemma}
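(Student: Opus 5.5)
The plan is to mirror the proof of the previous lemma, with $\mathfrak{J}_{\omega}$ playing the role of $\mathsf{J}_{g}$. In matrix form
$$\mathsf{J}=a\mathsf{J}_{1,J}+c\mathfrak{J}_{\omega}=\left(\begin{matrix} aJ && -c\,\omega^{-1}\\ c\,\omega && aJ^{*}\end{matrix}\right),$$
so $H=aJ$, $\alpha=-c\,\omega^{-1}$, $\beta=c\,\omega$ and $K=aJ^{*}$. By Lemma 3.0.2 neither $\mathsf{J}_{1,J}$ ($c\equiv 0$) nor $\mathfrak{J}_{\omega}$ ($a\equiv 0$) is integrable. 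We may therefore assume $a\not\equiv 0$ and $c\not\equiv 0$, suppose $\mathsf{J}$ is $[\cdot,\cdot]_{\nabla^{LC}}$-integrable, and seek a contradiction.

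\textbf{Step 1: $c$ is constant.} Fix $m\in\mathbb{S}^{6}$ and take normal coordinates at $m$, so that $\Gamma^{k}_{ij}(m)=0$ and $g_{ij}(m)=\delta_{ij}$. We may rotate them so that $J$ is standard at $m$; then $\omega^{-1}$ has constant entries $\pm\delta$-type at $m$. Equation \eqref{eq:3nabla} at $m$ becomes
$$\alpha^{ip}\partial_{p}\alpha^{jk}-\alpha^{jp}\partial_{p}\alpha^{ik}=0,$$
where $\alpha^{jk}=-c\,(\omega^{-1})^{jk}$. Expanding gives a term $c\,(\omega^{-1})^{ip}(\omega^{-1})^{jk}\partial_{p}c$ minus the symmetric term, plus $c^{2}$ times first derivatives of $\omega^{-1}$. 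The latter terms are expressible through $\nabla^{LC}\omega=g(\nabla J\cdot,\cdot)$, since the Christoffel symbols vanish at $m$. The nearly K\"ahler identity $(\nabla_{X}J)X=0$ makes $\nabla\omega$ totally skew, i.e. $\nabla\omega=\tfrac13 d\omega$. I expect these $c^{2}$ terms to cancel, or at least to be removable by choosing index triples suitably, for instance $k$ with $(\omega^{-1})^{jk}\neq0$ and $i$ such that $(\omega^{-1})^{ik}=0$. The outcome should be $c\,\partial_{p}c=0$ for every $p$. The connectedness argument of the previous lemma, applied to the open set $\{c\neq0\}$, then shows that $c$ is a nonzero constant, hence $a^{2}=1-c^{2}$ is constant as well. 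This is essentially equation \eqref{eq:3nabla} for the strong structure $\mathfrak{J}_{\omega}$, i.e. the Poisson-type condition $[\omega^{-1},\omega^{-1}]$ weighted by $c$.

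\textbf{Step 2: contradiction.} With $a,c$ nonzero constants, the main obstacle is that the remaining equations mix $\nabla J$ and $\nabla\omega$. The cleanest choice is equation \eqref{eq:4nabla}, which at $m$ reads
$$c\,a\left((\omega^{-1})^{ip}\partial_{p}J^{k}_{j}-(\omega^{-1})^{jp}\partial_{p}J^{k}_{i}\right)=0.$$
Since $\omega^{-1}$ is invertible, this is equivalent to $(\nabla_{\omega^{-1}\xi}J^{*})\eta=(\nabla_{\omega^{-1}\eta}J^{*})\xi$ for all covectors $\xi,\eta$. Writing $X=\omega^{-1}\xi$ and $Y=\omega^{-1}\eta$, this becomes a symmetry condition $(\nabla_{X}J)^{*}\omega(Y)=(\nabla_{Y}J)^{*}\omega(X)$, that is, $g((\nabla_{X}J)Z,JY)$ is symmetric in $X$ and $Y$. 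On the other hand, $g((\nabla_{X}J)Z,JY)=-g((\nabla_{X}J)JZ,Y)$ up to sign conventions, and $g((\nabla_{X}J)U,V)$ is totally skew on a nearly K\"ahler manifold. Using $(\nabla_{X}J)J=-J(\nabla_{X}J)$, the expression $g((\nabla_{X}J)JZ,Y)=\pm g((\nabla_{X}J)Y,JZ)$ is skew in $(X,Y)$. Being both symmetric and skew, it must vanish, which forces $\nabla J=0$. This contradicts the strictly nearly K\"ahler property, exactly as in Lemma 3.0.2.

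Should the sign bookkeeping in Step 1 fail to isolate $c\,\partial c$, I would instead use equation \eqref{eq:6nabla} or \eqref{eq:5nabla} with the metric diagonal at $m$, contracting with $\omega$ to extract $dc$. The hardest point is Step 1: showing that the $\nabla\omega$ contributions do not obstruct the extraction of $c\,\partial_{p}c$.
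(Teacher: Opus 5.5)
Your overall plan works, but Step 1 is only a guess as written, and the first guess is wrong: the $c^{2}$ terms in \eqref{eq:3nabla} do not cancel. Since $\nabla\omega$ is totally skew, $(\nabla_{\omega^{-1}\eta}\,\omega^{-1})\xi=-(\nabla_{\omega^{-1}\xi}\,\omega^{-1})\eta$. So those terms equal $2c^{2}(\nabla_{\omega^{-1}\xi}\,\omega^{-1})\eta$, which is $\nabla\omega$ with its indices raised by $\omega^{-1}$, and this is nowhere zero on $\mathbb{S}^{6}$.

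Your second option, choosing index triples, is the right one, and it is what the paper does (it sets $k=i$). With $J$ standard at $m$, take $k=i$ and $j=i'$, where $e_{i'}=\pm Je_{i}$, so that $(\omega^{-1})^{ii'}=\pm1$. Since $(\omega^{-1})^{ii}\equiv 0$, the equation becomes $c\,(\omega^{-1})^{ip}(\omega^{-1})^{i'i}\partial_{p}c+c^{2}(\omega^{-1})^{ip}\partial_{p}(\omega^{-1})^{i'i}=0$. The first term is $-c\,\partial_{i'}c$. The second is, up to sign, $c^{2}g((\nabla_{e_{i'}}J)e_{i'},e_{i})$, which vanishes. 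Hence $d(c^{2})=0$ everywhere, so $c^{2}$ and $a^{2}$ are constant on the connected $\mathbb{S}^{6}$.

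Your specific recipe ($k=j'$ and $i\neq j$) also works. The $c^{2}$ contributions that survive are of the form $g((\nabla_{X}J)Y,JY)$ and $g((\nabla_{X}J)Y,X)$, and both are zero.

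The non-cancellation is actually useful. Once $c$ is a nonzero constant, \eqref{eq:3nabla} reduces to $2c^{2}(\nabla_{\omega^{-1}\xi}\,\omega^{-1})\eta=0$, i.e. $\nabla\omega=0$. That is already the contradiction; it is the reasoning behind the non-integrability of $\mathfrak{J}_{\omega}$ in Lemma 3.0.2. So neither your Step 2 nor the paper's second computation is strictly needed.

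Your Step 2 is correct and genuinely different from the paper's. The paper evaluates \eqref{eq:4nabla} in the angular coordinates of Section 2.3 with $i=k=1$, $j=2$. It substitutes explicit Christoffel symbols and octonionic components $J^{k}_{2}$, and checks numerically that the resulting expression is nonzero at one point.

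You instead read \eqref{eq:4nabla} tensorially. For constants $a,c\neq 0$ it says that $g((\nabla_{X}J)Z,JY)$ is symmetric in $X,Y$. On the other hand, $g((\nabla_{X}J)Z,JY)=g((\nabla_{X}J)JZ,Y)=-g((\nabla_{X}J)Y,JZ)$, which is skew in $X,Y$ by the nearly K\"ahler identity. Together these force $\nabla J=0$.

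Your argument is coordinate-free and needs no octonion formulas or numerical evaluation. It also proves the statement on any connected strictly nearly K\"ahler manifold. The paper's route, by contrast, is tied to a specific chart on $\mathbb{S}^{6}$ and to a check at a single point.
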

\begin{proof}
Consider a point $m \in \mathbb{S}^{6}$ and choose normal coordinates in a neighbourhood of $m$. 
Observe that, in matrix form, $\mathsf{J}$ is given by:
$$\mathsf{J}=\left( \begin{matrix} aJ && -c\omega ^{-1} \\ 
c\omega && aJ^{*} \end{matrix} \right)$$
where $(\omega ^{-1})^{ij}=-g^{ik}J_{k}^{j}$ and $\omega _{ij}=J_{i}^{k}g_{kj}.$
\\ Assume, for contradiction, that there exist $a, c$ such that $\mathsf{J}$ is $[\cdot ,\cdot ]_{\nabla ^{LC}}$-integrable. Therefore, $\mathsf{J}$ satisfies the equations of Proposition 3.0.1, and in particular we may rewrite equation \eqref{eq:3nabla} in normal coordinates:
$$cg^{il}J_{l}^{p}(\partial _{p}(cg^{jm}J_{m}^{k}))-cg^{jl}J_{l}^{p}(\partial _{p}(cg^{im}J_{m}^{k}))=0.$$
Since $J_{i}^{i}=0$ for all $i$, we set $k=i$; the previous equation then reads:
$$c\delta ^{il}J_{l}^{p}(\partial _{p}(c\delta ^{jm}J_{m}^{k}))=0.$$
Observe that neither $a$ nor $c$ can be identically zero, since neither $\mathsf{J}_{1,J}$ nor $\mathfrak{J}_{\omega}$ is $[\cdot ,\cdot ]_{\nabla ^{LC}}$-integrable by Lemma 3.0.2. Therefore, the equation can be rewritten, making the Einstein summation convention explicit, as follows:
$$\sum _{p=1}^{6}J_{i}^{p}(\partial _{p}(cJ_{j}^{i}))=0$$
or
$$\sum _{p=1}^{6}(J_{i}^{p}J_{j}^{i}\partial _{p}c+cJ_{i}^{p}\partial _{p}J_{j}^{i})=0.$$
Since the identity holds for each index $i$, summing over $i$ and, after introducing $S_{j}=\sum _{p,i=1}^{6}J_{i}^{p}\partial _{p}J_{j}^{i}$ and noting that $\sum _{i}J_{i}^{p}J_{j}^{i}=-\delta _{j}^{p}$, the equation may be rewritten as follows:
$$\partial _{j}c=cS_{j}.$$
Since $J$ is an almost complex structure, one has $J^{2}=-Id$, from which it follows that $$(\partial _{p}J_{k}^{i})J_{j}^{k}+J_{k}^{i}(\partial _{p}J_{j}^{k})=0.$$
Contracting this equation with $J_{i}^{p}$ (i.e. multiplying by $J_{i}^{p}$ and summing over the indices $i,p$), we obtain, making the Einstein summation convention explicit: 
$$\sum _{i,p} J_{i}^{p}(\partial _{p}J_{k}^{i})J_{j}^{k}+\sum _{i,p}J_{i}^{p}J_{k}^{i}(\partial _{p}J_{j}^{k})=0$$
that is 
$$S_{k}J_{j}^{k}=\delta _{k}^{p}(\partial _{p}J_{j}^{k}).$$
Finally, observe that, since the structure is nearly K\"ahler, $\partial _{k}J_{j}^{k}=0$ for all $j$, $k$ and therefore $S_{k}=0$ for all $k$. Substituting into the previous equation, we obtain $\partial _{j}c=0$ for all $j$ and hence, by the arbitrariness of $m \in \mathbb{S}^{6}$ and the continuity of $c$ on the connected manifold $\mathbb{S}^{6}$, it follows that $c$ and $a$ are constant on all of $\mathbb{S}^{6}$.
\\ We now consider equation \eqref{eq:4nabla} for $\mathsf{J}$ in angular coordinates, namely those introduced in Section 2.3:
$$cg^{il}J_{l}^{p}(\partial _{p}(aJ_{j}^{k})+\Gamma _{pq}^{k}aJ_{j}^{q}-\Gamma _{pj}^{q}aJ_{q}^{k})+$$
$$-cg^{jm}J_{m}^{p}(\partial _{p}(aJ_{i}^{k})+\Gamma _{pq}^{k}aJ_{i}^{q}-\Gamma _{pi}^{q}aJ_{q}^{k})=0.$$
We now choose $i=k=1$ and $j=2$; observing that $a$ and $c$ are constant and $\Gamma _{1q}^{1}=0$ for all $q \in \{1 \dots , 6 \}$, we obtain:
$$ca\sum_{p=1}^{6}\left(J_{1}^{p}(\partial _{p}J_{2}^{1}+\sum _{q=1}^{6}(\Gamma _{pq}^{1}J_{2}^{q}-\Gamma _{p2}^{q}J_{q}^{1}))-g^{22}J_{2}^{p}(\sum _{q=1}^{6}(\Gamma _{pq}^{1}J_{1}^{q}-\Gamma _{p1}^{q}J_{q}^{1})) \right) =0.$$
From the direct computation of the Christoffel symbols, we have:
$$\Gamma _{pq}^{1} \neq 0, \hspace{0.2cm} \Gamma _{p1}^{q} \neq 0 , \hspace{0.2cm} \Gamma _{p2}^{q} \neq 0 \iff p=q\neq 1.$$
Moreover, the cases $p=1,2$ give a zero contribution. Hence, the equation becomes:
$$ca\sum_{p=3}^{6}(J_{1}^{p}(\partial _{p}J_{2}^{1}+\Gamma _{pp}^{1}J_{2}^{p}-\gamma _{p2}^{p}J_{p}^{1})-g^{22}J_{2}^{p}(\Gamma _{pp}^{1}J_{1}^{p}-\Gamma _{p1}^{p}J_{p}^{1}))=0$$
or in a more convenient form $ca\Theta _{p}=0.$ We observe that the Christoffel symbols can be computed in angular coordinates using the following formulas:
$$\Gamma _{mm}^{k}=-sin(u_{k})cos(u_{k})\prod _{a=k+1}^{m-1} sin ^{2}(u_{a})$$
$$\Gamma _{ji}^{i}=cot(u_{j}) \hspace{0.3cm} \forall 1\leq j i \leq 6.$$
Moreover, using the rules for octonionic multiplication, one obtains the values:
$$J_{2}^{3}=sin(u_{3})cos(u_{1}),$$
$$J_{2}^{4}=\frac{sin(u_{6})}{sin(u_{1})}(sin(u_{1})cos(u_{6})+sin(u_{6})cos(u_{1})cos(u_{2})),$$
$$J_{2}^{5}=\frac{sin(u_{1})sin(u_{6})}{sin(u_{2})}+\frac{sin(u_{1})cos(u_{5})cos(u_{6})}{sin(u_{2})tan(u_{4})}+\frac{cos(u_{1})cos(u_{6})}{tan(u_{2})},$$
$$J_{2}^{6}=\frac{sin(u_{1})cos(u_{5})cos(u_{6})}{sin(u_{2})sin(u_{5})}-\frac{sin(u_{1})sin(u_{6})cos(u_{4})}{sin(u_{2})sin(u_{4})}+\frac{cos(u_{1})cos(u_{2})cos(u_{4})cos(u_{6})}{sin(u_{2})sin(u_{4})sin(u_{5})}.$$
Substituting these expressions into $\Theta _{p}$, one verifies that there exists at least one point at which $\Theta _{p} \neq 0$ (for example $(0.6,0.9,0.7,1.0,0.8,0.5)$). Hence, either $a\equiv 0$ or $c \equiv 0$. As both cases have already been excluded by the previous lemmas, this yields a contradiction.
\end{proof}
\begin{oss}
It is useful at this point to make a brief remark on the properties of the nearly K\"ahler structure expressed at a point $m\in \mathbb{S}^{6}$ in normal coordinates. First observe that, in normal coordinates 
$$(\partial _{p}J)_{n}^{i}(m)=(\nabla _{p}J)_{n}^{i}(m)=\partial _{p}J_{n}^{i}(m).$$
As noted above, for a nearly K\"ahler structure we have $(\nabla _{X}J)X=0$, from which it follows that 
$$(\nabla _{X}J)Y+(\nabla _{Y}J)X=0$$ 
which, written in normal coordinates at the point $m$, yields:
$$\partial _{i}J_{j}^{k}+\partial _{j}J_{i}^{k}=0,$$
i.e. an antisymmetry condition in all three indices.
\\ Moreover, since $J^{2}=-Id$ one obtains $(\nabla J)J+J(\nabla J)=0$, whence in normal coordinates
$$(\partial _{p}J_{n}^{i})J_{k}^{n}+J_{n}^{i}(\partial _{p}J_{k}^{n})=0.$$
Finally, differentiating the trace of $J$ gives: $J_{jk}\partial _{p}J^{jk}=0.$ 
\end{oss}
\begin{proposition}
Let $(\mathbb{S}^{6},J,g)$ be the six-dimensional sphere with its usual nearly K\"ahler and let $\nabla ^{LC}$ be the Levi-Civita connection. Then there is not any spherical combination $\mathsf{J}=a\mathsf{J}_{1,J}+b\mathsf{J}_{g}+c\mathfrak{J}_{\omega}$ with $a,b,c \in C^{\infty}(\mathbb{S}^{6})$, $a^{2}+b^{2}+c^{2}=1$, and $b \not\equiv \pm 1$ such that the weak generalized almost complex structure $\mathsf{J}$ is $[\cdot ,\cdot ]_{\nabla ^{LC}}$-integrable.
\end{proposition}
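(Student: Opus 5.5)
We argue by contradiction, following the pattern of Lemmas 3.0.3 and 3.0.4. Suppose $\mathsf{J}=a\mathsf{J}_{1,J}+b\mathsf{J}_{g}+c\mathfrak{J}_{\omega}$ is $[\cdot ,\cdot ]_{\nabla ^{LC}}$-integrable with $b\not\equiv \pm 1$. In block form
$$\mathsf{J}=\left( \begin{matrix} aJ && -bg^{-1}-c\omega ^{-1} \\ bg+c\omega && aJ^{*} \end{matrix} \right),$$
so that $H=aJ$, $K=aJ^{*}$, $\alpha =-(bg^{-1}+c\omega ^{-1})$ and $\beta =bg+c\omega$. The cases already settled are these. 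If $c\equiv 0$, Lemma 3.0.3 applies, and the hypothesis $b\not\equiv\pm1$ forces $a\not\equiv 0$. If $b\equiv 0$, Lemma 3.0.4 applies. If $a\equiv 0$, we will handle the pair $(b,c)$ by the same computation as below. So the real work is the general case, and the first goal is to prove that $a,b,c$ are constant.

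\textbf{Step 1: constancy.} Fix $m\in\mathbb{S}^{6}$ and use normal coordinates at $m$, where $\Gamma=0$, $g=\delta$, and $\partial_p J^k_j$ is totally antisymmetric (Remark 3.0.5). Write $\alpha^{jk}=-b\delta^{jk}+cJ^{k}_{j}$ at $m$ and expand equation \eqref{eq:3nabla} at $m$. Derivatives then fall either on the coefficients $b,c$ or on $J$, and the terms with $\partial J$ are controlled by the nearly K\"ahler identities.

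Next, take the choices of indices used in the two lemmas. With $i=k\neq j$, the argument of Lemma 3.0.3 gives a term $b\,\partial_j b$ plus mixed terms. With $k=i$, summing over $i$ and using the vanishing of $S_j$ (proved in Lemma 3.0.4), one gets $\partial_j c$ terms. The aim is a linear system at each point of the form
$$b\,\partial_j b + c\,(\text{stuff})=0,\qquad \partial_j c + b\,(\ldots)=0 .$$
Combining this system with the constraint $a\,da+b\,db+c\,dc=0$ should force $db=dc=0$ wherever $b$ or $c$ is nonzero. The clopen argument from Lemma 3.0.3 then makes $b$ and $c$ constant on the connected sphere, and hence $a$ is constant as well.

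\textbf{Step 2: contradiction for constant coefficients.} With $a,b,c$ constant, use equation \eqref{eq:4nabla} in normal coordinates:
$$\alpha^{ip}\,\partial_p(aJ^{k}_{j})-\alpha^{jp}\,\partial_p(aJ^{k}_{i})=0.$$
The $b$-part gives $-ab(\partial_iJ^k_j-\partial_jJ^k_i)=-2ab\,\partial_iJ^k_j$ by antisymmetry. The $c$-part gives $ac\,(J^p_i\partial_pJ^k_j-J^p_j\partial_pJ^k_i)$. If $a\neq 0$ we need a contradiction. One route is to contract with a suitable $J$ so that the $c$-terms transform into $\partial J$ terms, using $(\partial J)J=-J(\partial J)$. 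The other route is to evaluate at an explicit point, as in Lemma 3.0.4. Either way we get $ab=0$ and $ac=0$ unless $\nabla J=0$, which fails because the structure is strictly nearly K\"ahler. So $a\neq 0$ forces $b=c=0$, hence $a=\pm1$, contradicting Lemma 3.0.2. If instead $a=0$, then $b^2+c^2=1$ with $b\neq\pm1$, so $c\neq 0$. In that case we check the mixed equation \eqref{eq:5nabla} or \eqref{eq:6nabla}: terms like $c\,\partial_i\omega^{-1}$ survive and are nonzero by strict nearly K\"ahlerness, which again gives a contradiction.

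\textbf{Main obstacle.} I expect Step 1 to be the hard part. Once both $b$ and $c$ are present, the cross terms $bc\,\partial J$ and $(\partial b)cJ$ in \eqref{eq:3nabla} do not obviously separate. I would try to split the equation into its $J$-invariant and $J$-anti-invariant parts in the indices $(j,k)$, because $\delta$ and $J$ are linearly independent there. If that splitting fails, the fallback is to evaluate symbolically at a generic point in the angular coordinates, as the paper does.
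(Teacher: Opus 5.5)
Your outline is the paper's: constancy from \eqref{eq:3nabla}, then \eqref{eq:4nabla} to kill $a$, then one more equation for $a\equiv0$. But Step 1, which carries the whole proof, is not established. You describe a system you hope to obtain and say it ``should force'' $db=dc=0$, and the tools you name do not get there.

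The relation $a\,da+b\,db+c\,dc=0$ is irrelevant here. The block $\alpha=-(bg^{-1}+c\omega^{-1})$ does not involve $a$, so \eqref{eq:3nabla} contains no $da$; the constraint only expresses $da$ through $db,dc$ and places no restriction on them.

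Now write \eqref{eq:3nabla} at $m$ using $A^{i}=\alpha^{ip}\partial_{p}b$, $C^{i}=\alpha^{ip}\partial_{p}c$ and $D^{ijk}=\alpha^{ip}\partial_{p}J^{k}_{j}$:
\[
-A^{i}\delta^{jk}+C^{i}J^{k}_{j}+cD^{ijk}+A^{j}\delta^{ik}-C^{j}J^{k}_{i}-cD^{jik}=0 .
\]
Your $k=i$ summation then yields exactly the paper's first relation $JC=-5A$. That is only six scalar equations for the twelve unknowns $(db,dc)$.

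The missing idea is a second, independent projection of this identity in $(j,k)$. The paper contracts with $J_{jk}$ and kills every $D$-term using $\mathrm{tr}(J\,\partial_{p}J)=0$, the total skewness of $\partial J$ at $m$, and $J^{T}J=\mathrm{Id}$. This gives $JA=5C$. Then $-A=J^{2}A=5JC=-25A$, so $A=C=0$. Since $\alpha(m)=-b\,\mathrm{Id}+cJ$ is invertible exactly when $(b,c)\neq(0,0)$, you get $db=dc=0$ on that open set, and only then does the clopen argument apply.

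Your own index choice $i=k\neq j$ can also be completed, but not as you describe it. There the cross term is $cD^{iji}=c\bigl(-b\,\partial_{i}J^{i}_{j}+c\,g((\nabla_{Je_{i}}J)e_{j},e_{i})\bigr)$. It vanishes by total skewness together with the nearly K\"ahler identity $g((\nabla_{JX}J)Y,X)=0$. What remains is $A^{j}+J^{i}_{j}C^{i}=0$ for each $i\neq j$, with no summation. In normal coordinates whose frame at $m$ is $J$-adapted ($J\partial_{1}=\partial_{2}$, $J\partial_{3}=\partial_{4}$, $J\partial_{5}=\partial_{6}$ at $m$), these equations force $A=C=0$. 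Neither the vanishing of the cross term nor the adapted frame appears in your proposal.

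There are also two smaller points.

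In Step 2, the identity that handles the $c$-part of \eqref{eq:4nabla} is $\nabla_{JX}J=-J\,\nabla_{X}J$, which uses total skewness as well as anticommutation. It turns the equation into $-2a\,(\partial_{i}J^{l}_{j})(b\,\delta^{k}_{l}+cJ^{k}_{l})=0$. This gives $a=0$ once you note that $b\,\mathrm{Id}+cJ$ is invertible for $(b,c)\neq(0,0)$.

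For $a\equiv0$, equation \eqref{eq:5nabla} is vacuous: with $H=K=0$ and $\nabla^{LC}$ torsion-free, every term vanishes. Equation \eqref{eq:6nabla} does work, and the paper does the equivalent computation in \eqref{eq:1nabla}. Since $\alpha=-\beta^{-1}$, you have $\nabla\alpha=\alpha(\nabla\beta)\alpha$. With $Y=\alpha\eta$, the equation reduces to $(\nabla_{X}\beta)Y-(\nabla_{Y}\beta)X=2c\,(\nabla_{X}\omega)(Y,\cdot)$. You need the total skewness of $\nabla\omega$ to see that the two terms add rather than cancel.
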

\begin{proof}
Consider a point $m \in \mathbb{S}^{6}$ and choose normal coordinates in a neighbourhood of $m$. 
Observe that, in matrix form, $\mathsf{J}$ is given by:
$$\mathsf{J}=\left( \begin{matrix} aJ && -c\omega ^{-1} -bg^{-1} \\ 
c\omega +bg && aJ^{*} \end{matrix} \right)$$
where $(\omega ^{-1})^{ij}=-g^{ik}J_{k}^{j}$ and $\omega _{ij}=J_{i}^{k}g_{kj}.$
\\ Assume, for contradiction, that there exist $a,b,c$ with $b\not \equiv \pm 1$ such that $\mathsf{J}$ is $[\cdot ,\cdot ]_{\nabla ^{LC}}$-integrable. Therefore, $\mathsf{J}$ satisfies the equations of Proposition 3.0.1, and in particular we may rewrite equation \eqref{eq:3nabla} in normal coordinates:
$$(-bg^{ip}+cg^{il}J_{l}^{p})(\partial _{p}(-bg^{jk}+cg^{jm}J_{m}^{k}))+$$
$$-(-bg^{jp}+cg^{jl}J_{l}^{p})(\partial _{p}(-bg^{ik}+cg^{im}J_{m}^{k}))=0.$$
Observing that in normal coordinates and that in this case $\alpha ^{ip}=-b\delta ^{ip}+cJ_{i}^{p}$, we can rewrite the equation as:
$$\alpha ^{ip}(-\delta ^{jk}\partial _{p}b+J_{j}^{k}\partial _{p}c+c\partial _{p}J_{j}^{k})-\alpha ^{jp}(-\delta ^{ik}\partial _{p}b+J_{i}^{k}\partial _{p}c+c\partial _{p}J_{i}^{k})=0.$$
Now we define:
$$A^{i}=\sum_{p}\alpha ^{ip}\partial _{p}b, \hspace{0.4cm} C^{i}=\sum _{p}\alpha ^{ip}\partial _{p}c, \hspace{0.4cm} D^{ijk}=\sum _{p}\alpha ^{ip}\partial _{p}J^{jk}$$
and substitute these into the equation to obtain
\begin{equation} \label{eq:1dim}
-A^{i}\delta ^{jk}+C^{i}J^{jk}+cD^{ijk}+A^{j}\delta ^{ik}-C^{j}J^{ik}-cD^{jik}=0.
\end{equation}
Observe first that $C^{i}J^{jk}$ and $D^{ijk}$ are skew-symmetric in the indices $j$, $k$. Hence we symmetrize the previous identity in $j$, $k$:
$$-A^{i}\delta ^{jk}+\frac{1}{2}(A^{j}\delta ^{ik}+A^{k}\delta ^{ij})-\frac{1}{2}(C^{j}J^{ik}+C^{k}J^{ij})-\frac{1}{2}c(D^{jik}+D^{kij})=0.$$
Contracting this equation with $\delta _{jk}$ gives:
$$\sum_{j,k}\left(-A^{i}\delta _{jk}\delta ^{jk}+\frac{1}{2}\delta _{jk}(A^{j}\delta ^{ik}+A^{k}\delta ^{ij})-\frac{1}{2}\delta _{jk}(C^{j}J^{ik}+C^{k}J^{ij})-\frac{1}{2}c\delta _{jk}(D^{jk}+D^{kij})\right) =0.$$
Carrying out the contractions yields
\begin{equation} \label{eq:2dim}
-6A^{i}+A^{i}-\sum_{j}C^{j}J^{ij}-c\sum _{j}D^{jij}=0.
\end{equation}
Using the definition of $D^{ijk}$ we have 
$$D^{jij}=-b\partial _{j}J^{ij}+c\sum _{p}J^{jp}\partial _{p}J^{ij}$$
and by the previous remark $\partial _{j}J^{ij}=0.$ Moreover, from 
$$(\partial _{p}J_{n}^{i})J_{k}^{n}+J_{n}^{i}(\partial _{p}J_{k}^{n})=0$$
(if we set $k=p$ and sum over $p$) we obtain
$$\sum _{p,n}(\partial _{p}J_{n}^{i})J_{p}^{n}=0$$
which implies $\sum _{j}D^{jij}=0$. Therefore equation \eqref{eq:2dim} reduces to:
$$(J\cdot C)^{i}=-5A^{i}.$$
i.e. $JC=-5A.$
\\ Next, start from equation \eqref{eq:1dim} and contract it with $J_{jk}$. This yields
$$\sum _{j,k}(-A^{i}J_{jk}\delta ^{jk}+C^{i}J_{jk}J^{jk}+cJ_{jk}D^{ijk}+J_{jk}A^{j}\delta ^{ik}-J_{jk}C^{j}J^{ik}-cJ_{jk}D^{jik})=0$$
which after simplifying becomes
$$-A^{i}J_{j}^{j}-6C^{i}+c\sum _{j,k}J_{jk}D^{ijk}+\sum _{j}J_{ji}A^{j}-\sum _{j,k}J^{ik}J_{jk}C^{j}-c\sum _{j,k}J_{jk}D^{jik}=0.$$
By the previous remark,
$$\sum _{j,k}J_{jk}D^{ijk}=\sum _{j,k,p}\alpha ^{ip}J_{jk}\partial _{p}J^{jk}=\sum _{p}\alpha ^{ip}\sum _{j,k}J_{jk}\partial _{p}J^{jk}=0.$$
Furthermore $-\sum _{k,j}J^{ik}J_{jk}C^{j}=\delta _{j}^{i}C^{j}=C^{i}$ and arguing as before for $\sum _{j}D^{jij}$ one shows
$$c\sum _{j,k}J_{jk}D^{jik}=c\sum_{j,k}J_{jk}\sum _{p}\alpha ^{ip}\partial _{p}J^{ik}=$$
$$=c(\sum _{p,k}-bJ_{pk}\partial _{p}J^{ik}+\sum _{p,k}c\delta _{k}^{p}\partial _{p}J^{ik})=-bc\sum _{p,k}J_{pk}\partial _{p}J^{ik}=0.$$
Substituting these vanishing terms into the contracted equation gives:
$$-6C^{i}+(AJ)^{i}+C^{i}=0$$
i.e. $JA=5C$. Thus we have the linear system
$$ \begin{cases}
JC=-5A \\ 
5C=JA 
\end{cases}$$
from which it follows that $C=A=0$. Since $A$ and $C$ can be written respectively as $\alpha \nabla b=0$ and $\alpha \nabla c=0$; since $\alpha$ is invertible in $\Omega =\mathbb{S}^{6}-\{x \in \mathbb{S}^{6}:b(x)=c(x)=0\}$, we have $\nabla b=\nabla c=0$ in $\Omega$. Therefore $b$ and $c$ are locally constant on $\Omega$, and consequently on each connected component $U$ of $\Omega$ they take constant values, say $b\equiv K\neq 0$ and $c\equiv K_{2}\neq 0$ on $U$. Let $U$ be such a component. By continuity of $b$ and $c$, any limit point of $U$ belongs to $\Omega$ and therefore $b$ and $c$ assume the same constant values $K$, $K_{2}$ on $\overline{U}^{\mathbb{S}^{6}}$; in particular $\overline {U}^{\mathbb{S}^{6}}\subset \Omega$ and it is connected. Hence, by maximality of $U$ as a connected subset of $\Omega $, we must have $U=\overline{U}^{\mathbb{S}^{6}}$. Therefore $U$ is closed (as well as open) in $\mathbb{S}^{6}$. If $\Omega$ is nonempty then some component $U$ is nonempty and therefore clopen in $\mathbb{S}^{6}$; by connectedness of $\mathbb{S}^{6}$ we must have $U=\mathbb{S}^{6}$. Therefore $b$ and $c$ are constant (and nonzero for the results in previous lemmas) on all of $\mathbb{S}^{6}$. In particular the endomorphism $\alpha $ is everywhere invertible and $\nabla b=\nabla c=0$ on $\mathbb{S}^{6}$. Hence, $a$, $b$ and $c$ are globally constant.  \\ We already observed in the preceding lemmas that $b\not \equiv 0$ and $c\not \equiv 0$, since either vanishing identically would yield structures that are not $[\cdot ,\cdot ]_{\nabla ^{LC}}$-integrable. Assume now $a \not \equiv 0$. Consider equation \eqref{eq:4nabla} specialized to $\mathsf{J}$:
$$(-bg^{ip}+cg^{il}J_{l}^{p})(\partial _{p}(aJ_{j}^{k}))-(-bg^{jp}+cg^{jl}J_{l}^{p})(\partial _{p}(aJ_{i}^{k}))=0.$$
Expanding and simplifying yields
$$-2b\partial _{i}J_{k}^{j}+c(\sum _{p}J^{ip}\partial _{p}J_{j}^{k}-\sum _{p}J^{jp}\partial _{p}J_{i}^{k})=0.$$
Using $(\nabla _{X}J)J+J(\nabla _{X}J)=0$ and Remark 3.0.2 in normal coordinates, the previous identity can be rewritten as
$$-2b\partial _{i}J_{k}^{j}-2c(\partial _{i}J_{j}^{n})J_{n}^{k}=0$$
or, more compactly,
$$\partial _{i}J_{j}^{n}(b\delta _{n}^{k}+cJ_{n}^{k})=0.$$
Writing this in endomorphism form with $B=bId+cJ$:
$$(\partial _{i}J_{j})^{n}B_{n}^{k}=0.$$
Since $B$ is invertible (because $b$ and $c$ are not both zero) we would deduce $\partial _{i}J_{j}^{n}=0$ for all $i,j,n$ contradicting the nearly K\"ahler condition $\nabla J \neq 0$. Hence $a \equiv 0$.
Finally, even when $a \equiv 0$ one reaches a contradiction. Indeed, write equation \eqref{eq:1nabla} in normal coordinates for $\mathsf{J}$:
$$(-b\delta ^{lk}+c\delta ^{ln}J_{n}^{k})(\partial _{i}(\delta ^{jl}+cJ_{j}^{n}\delta _{nl})-\partial _{j}(b\delta ^{il}+cJ_{i}^{n}\delta _{nl}))=0$$
which, using Remark 3.0.2, reduces to:
$$2(-b\delta ^{lk}+cJ_{l}^{k})\partial _{i}J_{j}^{l}=0.$$
Again, viewing this in terms of endomorphism and recalling that $B$ is invertible and $Im (\nabla) J=T_{m}\mathbb{S}^{6}$, we would be forced to conclude $\nabla J=0$, a contradiction.
\\ This completes the proof.
\end{proof}

\section{Courant-integrability}
In this section we partially revisit the material presented in the previous section. Specifically, we first derive sufficient conditions for integrability with respect to the Courant bracket and then analyze these conditions in local coordinates. However, it will not be possible to study the Courant integrability of the spherical combinations, since these do not constitute strong generalized almost-complex structures. Therefore, it will be necessary to consider other classes of generalized almost complex structures.
Furthermore, it is not even possible to consider generalized almost complex structures of constant type equal to $0$, $1$, or $2$.
\begin{lemma}
The sphere $\mathbb{S}^{6}$ does not admit generalized almost complex structures of constant type $1$ or $2$. Furthermore, it does not admit integrable generalized complex structures of type $0$.
\end{lemma}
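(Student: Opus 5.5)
The argument uses Gualtieri's notion of type: for a strong generalized almost complex structure $\mathfrak{J}$, the type at $p$ is the complex codimension of $\pi(V_{i})$ in $T_{p}M\otimes\mathbb{C}$. Equivalently, it is $\frac{1}{2}\dim_{\mathbb{R}}$ of the image of the block $\alpha$ in the decomposition of Proposition 2.9.2 (more precisely, of $\ker$ of the anchor restricted to the symplectic directions). On a $2n$-dimensional manifold the type ranges over $0,\dots,n$, and where it is constant equal to $k$ the structure induces topological reductions. Concretely, $\Delta:=\pi(V_{i})\cap\overline{\pi(V_{i})}$ gives a real subbundle $D=\mathrm{Re}(\Delta)$ of $TM$ of real rank $2n-2k$. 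The quotient $TM/D$ carries an induced almost complex structure of complex rank $k$, and $D$ carries a nondegenerate $2$-form (the imaginary part of the $B$-field/symplectic component). On $\mathbb{S}^{6}$ we have $n=3$, and I will handle types $1$ and $2$ by obstruction theory and type $0$ separately.

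\textbf{Types $1$ and $2$.} Constant type $k$ yields a splitting $T\mathbb{S}^{6}\cong D\oplus Q$, where $Q$ is a complex vector bundle of rank $k$ and $D$ is an oriented (symplectic) real bundle of rank $6-2k$, hence also complex. For $k=2$, $D$ is a real plane bundle and $Q$ has complex rank $2$. For $k=1$, $Q$ is a complex line bundle and $D$ has rank $4$. In either case $T\mathbb{S}^{6}$ has a nontrivial proper splitting into even-rank subbundles. I will rule this out with characteristic classes. Since $H^{2}(\mathbb{S}^{6})=H^{4}(\mathbb{S}^{6})=0$, every complex line bundle and every rank-$2$ oriented bundle has vanishing Euler class, and indeed rank-$2$ oriented bundles are trivial because they are classified by $H^{2}$. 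So in case $k=2$ the bundle $D$ is trivial, and in case $k=1$ the bundle $Q$ is trivial. Either way $T\mathbb{S}^{6}$ contains a trivial rank-$2$ summand and therefore a nowhere vanishing vector field. That contradicts $\chi(\mathbb{S}^{6})=2\neq0$ by Poincaré–Hopf.

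\textbf{Type $0$.} An integrable generalized complex structure of constant type $0$ is, by Gualtieri's local classification, a $B$-transform of a symplectic structure: $\mathfrak{J}=e^{-B}\mathfrak{J}_{\omega}e^{B}$, with pure spinor $e^{B+i\omega}$. Closedness of the pure spinor ($d\rho=0$, from Definition 2.8) forces $d(B+i\omega)=0$, so $\omega$ is a closed nondegenerate $2$-form, i.e. a symplectic form on $\mathbb{S}^{6}$. Then $[\omega]\in H^{2}(\mathbb{S}^{6})=0$, so $\omega=d\theta$, and $\omega^{3}=d(\theta\wedge\omega^{2})$. By Stokes, $\int_{\mathbb{S}^{6}}\omega^{3}=0$, which contradicts $\omega^{3}$ being a volume form.

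\textbf{Main obstacle.} The delicate step is the type-$k$ reduction for $k=1,2$: I must verify carefully that constant type gives a genuine smooth splitting of $T\mathbb{S}^{6}$ with a rank-$2$ piece of the right kind. I would rely on Gualtieri's result that $E\cap \overline{E}$ for $E=\pi(V_i)$ has constant rank when the type is constant, and then use a metric to split.
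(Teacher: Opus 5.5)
Your proof is correct and has the same skeleton as the paper's: a splitting of $T\mathbb{S}^{6}$ played against $\chi(\mathbb{S}^{6})=2$ for types $1,2$, and Stokes applied to $\omega^{3}$ for type $0$. The key steps differ, though.

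For types $1,2$ the paper uses the Whitney product formula $e(T\mathbb{S}^{6})=e(F)\cup e(N)$, with the factors in $H^{2}$ and $H^{4}$, both zero. You instead use that oriented plane bundles (equivalently complex line bundles) are classified by $H^{2}(\mathbb{S}^{6};\mathbb{Z})=0$, so the rank-$2$ summand is trivial and Poincar\'e--Hopf finishes. The paper's route is a one-line real characteristic-class computation. Yours needs the integral classification but lands on the more elementary contradiction of a nowhere-vanishing vector field.

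Your source for the splitting is the more appropriate one. The paper invokes the generalized Darboux theorem, which presupposes integrability, while the statement concerns almost structures. The pointwise identity $\pi(V_{i})\cap\overline{\pi(V_{i})}=\Delta\otimes\mathbb{C}$, with $\Delta=\operatorname{im}\alpha$ of constant rank $6-2k$ and hence a smooth subbundle, is exactly what is needed. So your ``main obstacle'' is settled.

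For type $0$ you cover every $B$-transform of a symplectic structure, whereas the paper only treats $\mathfrak{J}_{\omega}$. To make this airtight, note that the degree-zero part of the pure spinor never vanishes. It therefore normalizes globally to $e^{B+i\omega}$ with $B,\omega$ global real $2$-forms. Closedness of a local generator $fe^{B+i\omega}$ then gives $df=0$ in degree one and $d(B+i\omega)=0$ in degree three. A merely local normal form would not supply the global symplectic form that Stokes needs.

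One slip, not used later: the type is $n-\tfrac{1}{2}\operatorname{rank}\alpha=\tfrac{1}{2}\dim_{\mathbb{R}}\ker\alpha$, not $\tfrac{1}{2}\dim_{\mathbb{R}}\operatorname{im}\alpha$.
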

\begin{proof}
Suppose, by contradiction, that there exists a generalized almost complex structure of constant type $1$ on the sphere $\mathbb{S}^{6}$. This implies, by generalized Darboux's theorem, that the tangent bundle can be decomposed as:
$$ T\mathbb{S}^{6}=F^{4}\oplus N^{2}$$
where $F^{4}$ is the tangent bundle to the symplectic leaves, while $N^{2}$ is the normal bundle equipped with a complex structure.
\\ Let us consider the Euler class $e(T\mathbb{S}^{6})$ and observe that (by the Gauss-Bonnet theorem) we obtain:
$$\int _{\mathbb{S}^{6}}e(T\mathbb{S}^{6})=\chi (\mathbb{S}^{6})=2.$$
At the same time, however, the Euler class can be decomposed via the cup product as:
$$e(T\mathbb{S}^{6})=e(F^{4})\cup e(N^{2}).$$
We note, however, that $e(F^{4})=0=e(N^{2})$ since $H^{2}(\mathbb{S}^{6},\mathbb{R})=H^{4}(\mathbb{S}^{6},\mathbb{R})=0$, and therefore $e(T\mathbb{S}^{6})=0$, which yields a contradiction. Thus, generalized almost complex structures of type $1$ cannot exist on $\mathbb{S}^{6}$.
The non-existence of generalized almost complex structures of type $2$ on the sphere $\mathbb{S}^{6}$ is proved analogously.
\\ Finally, we observe that while generalized almost complex structures of type $0$, such as $\mathfrak{J}_{\omega}$, may exist, they cannot be integrable. Indeed, if, by contradiction, $\mathfrak{J}_{\omega}$ were integrable, then we would have $d\omega=0$. But since $H^{2}(\mathbb{S}^{6},\mathbb{R})=0$, there would exist a 1-form $\alpha$ such that $\omega=d\alpha$, and therefore the volume form could be written as $\omega^{3}=d(\alpha\wedge\omega\wedge\omega)$. By Stokes' theorem, it follows that:
$$\int _{\mathbb{S}^{6}}\omega ^{3}=\int _{\mathbb{S}^{6}} d(\alpha \wedge \omega \wedge \omega )=\int _{\partial \mathbb{S}^{6}}(\alpha \wedge \omega \wedge \omega )=0$$
which would imply $\omega^{3}=0$, an absurdity since $\omega$ is a non-degenerate symplectic form.
\end{proof}
\begin{oss}
Finally, we observe that a generalized almost complex structure of type $3$ (complex type) can exist on the sphere $\mathbb{S}^{6}$, since it admits almost complex structures. However, its integrability is strictly tied to the Hopf Problem. Indeed, a generalized almost complex structure of the form $\mathfrak{J}_{J}$ is integrable with respect to the Courant bracket if and only if $J$ is integrable.
\end{oss}
\subsection{Integrability conditions}
\begin{lemma}
Let $\mathfrak{J}=\left( \begin{matrix} H && \alpha \\ \beta && K \end{matrix} \right) $ be a strong generalized almost complex structure on $M$. Let $N^{\nabla}(\mathfrak{J})$ be the Nijenhuis tensor of $\mathfrak{J}$ with respect to $[\cdot ,\cdot ]_{\nabla}$.
Then $\mathfrak{J}$ is $[\cdot ,\cdot ]_{C}$-integrable if and only if the following conditions hold for all $X,Y \in C^{\infty}(M)$ and for all $\xi , \eta \in C^{\infty}(T^{*}M)$:
\begin{equation}
\begin{aligned}
N_{\mathfrak{J}}(X,Y)\big|_{C^{\infty}(TM)}
&\quad =[HX,HY]-H([HX,Y]+[X,HY]) -\alpha (-\mathfrak{L}_{Y}\beta X \\ 
&\quad +\mathfrak{L}_{X}\beta Y+\frac{1}{2}d(\beta X(Y))-\frac{1}{2}d(\beta Y(X)))-[X,Y]=0,
\end{aligned}
\end{equation}

\begin{equation}
\begin{aligned}
N_{\mathfrak{J}}(X,Y)\big|_{C^{\infty}(T^{*}M)}
&\quad \mathfrak{L}_{HX}(\beta Y)-\mathfrak{L}_{HY}(\beta X)-\frac{1}{2}d((\beta Y)(HX)+ \\ 
&\quad -(\beta X)(HY))-\beta ([HX,Y]+[X,HY])-K(-\mathfrak{L}_{Y}\beta X+ \\ 
&\quad +\mathfrak{L}_{X}\beta Y +\frac{1}{2}d(\beta X(Y))-\frac{1}{2}d(\beta Y(X)))=0,
\end{aligned}
\end{equation}

\begin{equation}
\begin{aligned}
N_{\mathfrak{J}}(\xi ,\eta )\big| _{C^{\infty }(TM)}
&\quad =[\alpha \xi ,\alpha \eta ]-\alpha (\mathfrak{L}_{\alpha \xi }\eta -\mathfrak{L}_{\alpha \eta }\xi +\\ &\quad +\frac{1}{2}d(-\eta (\alpha \xi )+\xi (\alpha \eta )))=0,
\end{aligned}
\end{equation}

\begin{equation}
\begin{aligned}
N_{\mathfrak{J}}(\xi ,\eta )\big| _{C^{\infty}(T^{*}M)}
&\quad =\mathfrak{L}_{\alpha \xi }(K\eta )-\mathfrak{L} _{\alpha \eta }(K\xi )-\frac{1}{2}d(-(K\xi )(\alpha \eta )+ \\ 
&\quad +(K\eta )(\alpha \xi ))-K(\mathfrak{L}_{\alpha \xi }\eta -\mathfrak{L}_{\alpha \eta }\xi +\\ 
&\quad +\frac{1}{2}d(-\eta (\alpha \xi )+\xi (\alpha \eta )))=0,
\end{aligned}
\end{equation}

\begin{equation}
\begin{aligned}
N_{\mathfrak{J}}(X,\eta )\big| _{C^{\infty}(TM)}
&\quad =[HX,\alpha \eta ]-H([X,\alpha \eta ])-\alpha (\mathfrak{L}_{HX}\eta +\mathfrak{L}_{X}(K\eta )+ \\ 
&\quad -\frac{1}{2}d(\eta (HX)+K\eta (X))=0,
\end{aligned}
\end{equation}

\begin{equation}
\begin{aligned}
N_{\mathfrak{J}}(X,\eta )\big|_{C^{\infty } (T^{*}M)}
&\quad =\mathfrak{L}_{HX}(K\eta )-\mathfrak{L}_{\alpha \eta}(\beta X)+\frac{1}{2}d((\beta X)(\alpha \eta )+ \\ 
&\quad (K\eta )(HX))-\beta ([X,\alpha \eta ])-K(\mathfrak{L}_{HX}\eta +\mathfrak{L}_{X}(K\eta )+\\ 
&\quad -\frac{1}{2}d(\eta (HX)+K\eta (X))) -\mathfrak{L}_{X}\eta +\frac{1}{2}d(\eta (X)))=0.
\end{aligned}
\end{equation}
\end{lemma}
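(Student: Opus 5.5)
The plan mirrors the proof of Proposition 3.0.1, with $[\cdot,\cdot]_{\nabla}$ replaced by the Courant bracket. By Proposition 2.8.1(i), $\mathfrak{J}$ is $[\cdot,\cdot]_{C}$-integrable if and only if its Courant Nijenhuis tensor $N_{\mathfrak{J}}$ vanishes identically. Since $\mathfrak{J}$ is strong, $N_{\mathfrak{J}}$ is a tensor (Remark 2.10), hence $C^{\infty}(M)$-bilinear. It therefore suffices to evaluate it on pure pairs $(X,Y)$, $(\xi,\eta)$ and $(X,\eta)$, and to split each value into its $TM$ and $T^{*}M$ components. Because the Courant bracket is skew, $N_{\mathfrak{J}}$ is skew, so the pair $(\xi,X)$ gives nothing new. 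The six displayed equations are then exactly these six components.

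For the computation I write $\mathfrak{J}(X+\xi)=(HX+\alpha\xi)+(\beta X+K\xi)$ and use $\mathfrak{J}^{2}=-Id$, so that the last term $\mathfrak{J}^{2}[v,w]_{C}$ becomes $-[v,w]_{C}$. The Courant bracket enters through its expanded form (Definition 2.5 with $H=0$):
$$[X+\xi,Y+\eta]_{C}=[X,Y]+\mathcal{L}_{X}\eta-\mathcal{L}_{Y}\xi-\tfrac12 d(\iota_{X}\eta-\iota_{Y}\xi).$$
For instance, for $(\xi,\eta)$ I have $[\xi,\eta]_{C}=0$ and $[\mathfrak{J}\xi,\mathfrak{J}\eta]_{C}=[\alpha\xi+K\xi,\alpha\eta+K\eta]_{C}$. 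The cross terms satisfy $[\alpha\xi+K\xi,\eta]_{C}+[\xi,\alpha\eta+K\eta]_{C}=\mathcal{L}_{\alpha\xi}\eta-\mathcal{L}_{\alpha\eta}\xi-\tfrac12 d(\eta(\alpha\xi)-\xi(\alpha\eta))$, a pure 1-form. Applying $\mathfrak{J}$ to it, the $\alpha$-part gives the $TM$-component (3) and the $K$-part gives the $T^{*}M$-component (4). The pairs $(X,Y)$ and $(X,\eta)$ are handled the same way, using $\mathfrak{J}X=HX+\beta X$. One must keep track of the inner products $\iota_{HX}(\beta Y)$, $\iota_{\alpha\eta}(\beta X)$ and so on that generate the exact terms. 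For the mixed pair the term $-[X,\eta]_{C}=-\mathcal{L}_{X}\eta+\tfrac12 d(\eta(X))$ survives, which gives the final terms in (6).

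Where useful, I simplify the exact terms with the strong-structure identities of Proposition 2.9.2: $K=-H^{*}$, $\alpha=-\alpha^{*}$ and $\beta=-\beta^{*}$. These give, for example, $\eta(\alpha\xi)=-\xi(\alpha\eta)$ and $(\beta X)(Y)=-(\beta Y)(X)$. The stated formulas keep the unsimplified forms, so I would present them that way and note that the identities merely let the pairs combine.

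The converse is immediate: if the six components vanish for all arguments, $C^{\infty}(M)$-bilinearity extends the vanishing to all of $E$, and Proposition 2.8.1(i) then gives involutivity of $V_{i}$. The main obstacle is bookkeeping rather than concept. The mixed case $(X,\eta)$ is where sign errors in the $\tfrac12 d(\cdot)$ terms and in $\mathfrak{J}^{2}=-Id$ are most likely, so there I would check the result against the known case $\mathfrak{J}_{J}$, whose Courant integrability reduces to $N_{J}=0$ (Remark 4.1), as a consistency test.
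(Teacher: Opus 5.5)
Your route is the paper's own: expand $N_{\mathfrak J}$ with the standard Courant bracket on the pure pairs and read off the $TM$ and $T^{*}M$ components (the paper writes out only (1)). Your intermediate formulas are correct, and they reproduce (1)--(5). The step that fails is the claim that they also reproduce the displayed (6). In
\[
[\mathfrak J X,\mathfrak J\eta]_C=[HX,\alpha\eta]+\mathcal L_{HX}(K\eta)-\mathcal L_{\alpha\eta}(\beta X)-\tfrac12\, d\big((K\eta)(HX)-(\beta X)(\alpha\eta)\big)
\]
the function $(K\eta)(HX)$ carries a minus sign. So the true $T^{*}M$-component contains $\tfrac12\, d\big((\beta X)(\alpha\eta)-(K\eta)(HX)\big)$, whereas (6) prints $\tfrac12\, d\big((\beta X)(\alpha\eta)+(K\eta)(HX)\big)$. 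The strong identities cannot close this gap. From $K=-H^{*}$, $\alpha\beta=-(I+H^{2})$ and $\alpha=-\alpha^{*}$ one gets $(K\eta)(HX)=\eta(X)-(\beta X)(\alpha\eta)$. Hence the printed expression exceeds the actual component by $d\big((K\eta)(HX)\big)$, which is nonzero in general.

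The $\mathfrak J_J$ test you proposed exposes this. On $\mathbb R^{2}$ take the strong structure $\mathfrak J_{-1,J_0}$ with $J_0\partial_1=\partial_2$ and $J_0\partial_2=-\partial_1$. It is Courant integrable: tensoriality gives $N_{\mathfrak J}(\partial_1,x^{1}dx^{1})=x^{1}N_{\mathfrak J}(\partial_1,dx^{1})=0$, since all brackets of coordinate fields and differentials vanish. With $X=\partial_1$ and $\eta=x^{1}dx^{1}$ one has $K\eta=x^{1}dx^{2}$. The nonzero terms of the printed (6) are then:
\begin{itemize}
\item $\tfrac12 dx^{1}$ from the exact term;
\item $dx^{1}$ from $-K(\cdots)$;
\item $-dx^{1}$ from $-\mathcal L_X\eta$;
\item $\tfrac12 dx^{1}$ from $\tfrac12 d(\eta(X))$.
\end{itemize}
They sum to $dx^{1}\neq 0$, whereas with the corrected sign the sum is $0$. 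So (6) as printed is false, and with it the ``only if'' direction. Neither your argument nor the paper's (which checks only (1)) proves it. What your computation does prove is the lemma with $+(K\eta)(HX)$ replaced by $-(K\eta)(HX)$ in (6). This is also the sign in the coordinate version stated right after this lemma (the term $-K^{j}_{l}H^{l}_{i}$). You should state and prove that corrected formula rather than ``presenting them that way''.
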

\begin{proof}
Recall that a strong generalized almost complex structure is said to be $[\cdot ,\cdot ]_{C}$-integrable if and only if its Nijenhuis tensor vanishes identically, i.e.
$$N_{\mathfrak{J}}(X+\xi ,Y+\eta )=0 \hspace{0.4cm} \forall (X+\xi ) ,(Y+\eta ) \in C^{\infty}(E).$$
Carrying out the computations using Definition $2.8$ with $\mathfrak{J}$ and the bracket $[\cdot ,\cdot ]_{C}$ yields the six conditions stated above. For example, we now compute condition $(1)$: 
$$N_{\mathfrak{J}}(X,Y)\big|_{C^{\infty}(TM)}=([\mathfrak{J}X,\mathfrak{J}Y]_{C}-\mathfrak{J}[\mathfrak{J}X,Y]_{C}-\mathfrak{J}[X,\mathfrak{J}Y]_{C}- [X,Y]_{C})\big|_{C^{\infty}(TM)}=$$
$$=([HX,HY]+\mathfrak{L}_{HX}(\beta Y)-\mathfrak{L}_{HY}(\beta X)-\frac{1}{2}d((\beta Y)(HX)-(\beta X)(HY))+$$
$$-(H+\beta )([HX,Y]+[X,HY])-(\alpha +K)(-\mathfrak{L}_{Y}(\beta X)+\mathfrak{L}_{X}(\beta Y)+$$
$$+\frac{1}{2}d((\beta X)(Y)-(\beta Y)(X)))-[X,Y] +\frac{1}{2}d(\iota _{X}\eta -\iota _{Y}\xi ))\big|_{C^{\infty}(TM)}=$$
$$=[HX,HY]-H([HX,Y]+[X,HY])-\alpha (-\mathfrak{L}_{Y}(\beta X)+$$
$$+\mathfrak{L}_{X}(\beta Y)+\frac{1}{2}d(\beta X(Y)-\beta Y(X))-[X,Y].$$
\end{proof}

\begin{lemma}
Let $\mathfrak{J}=\left( \begin{matrix} H && \alpha \\ \beta && K \end{matrix} \right) $ be a strong generalized almost complex structure on $M$. Let us take local coordinates $(U,(x^{1},\dots ,x^{n}))$ such that:
$$H\partial _{i}=H^{j}_{i}\partial _{j}, \hspace{0.4cm} \alpha dx^{i}=\alpha ^{ij}\partial_{j}, \hspace{0.4cm}
\beta \partial _{i}=\beta _{ij} dx^{j}, \hspace{0.4cm}
K dx^{i}=K_{j}^{i}dx^{j}.$$
Then $\mathfrak{J}$ is Courant-integrable if and only if the following conditions are met for all $i,j,k \in \{1, \dots , dim_{\mathbb{R}}M\}$:
\begin{equation}
\begin{aligned}
(N_{\mathfrak{J}}(\partial _{i},\partial _{j})\big|_{C^{\infty}(TM)})^{k}
&\quad =H_{i}^{l}\partial_{l}H_{j}^{k}-H_{j}^{l}\partial _{l}H_{i}^{k}+H_{l}^{k}(\partial _{j}H_{i}^{l}-\partial_{i}H_{j}^{l})+\\ 
&\quad +\alpha ^{lk}(\partial _{j}\beta _{il}-\partial _{i}\beta _{jl}-\frac{1}{2}\partial _{l}(\beta _{ij}-\beta _{ji}))=0,
\end{aligned}
\end{equation}

\begin{equation}
\begin{aligned}
(N_{\mathfrak{J}}(\partial _{i},\partial _{j})\big|_{C^{\infty}(T^{*}M)})^{k}
&\quad = H_{i}^{l}\partial _{l}\beta _{ik}+\beta _{jl}\partial _{k}H_{i}^{l}-H_{j}^{l}\partial _{l}\beta _{ik}-\beta _{il}\partial _{k}H_{j}^{l}+ \\ 
&\quad +\frac{1}{2}\partial _{k}(\beta _{jl}H_{i}^{l}-\beta _{il}H_{j}^{l})+\beta _{lk}(\partial _{j}H_{i}^{l}-\partial _{i}H_{j}^{l})+\\ 
&\quad +K_{k}^{l}(\partial _{j}\beta _{il}-\partial _{i}\beta _{jl}-\frac{1}{2}\partial _{l}(\beta _{ij}-\beta _{ji}))=0,
\end{aligned}
\end{equation}

\begin{equation}
\begin{aligned}
(N_{\mathfrak{J}}(dx^{i},dx^{j})\big|_{C^{\infty}(TM)})^{k}
&\quad =\alpha ^{il}\partial _{l}\alpha ^{jk}-\alpha ^{jl}\partial _{l}\alpha ^{ik}-\frac{1}{2}\alpha ^{lk}(\partial _{l}(\alpha ^{ij}-\alpha ^{ji}))=0
\end{aligned}
\end{equation}

\begin{equation}
\begin{aligned}
(N_{\mathfrak{J}}(dx^{i},dx^{j})\big|_{C^{\infty}(T^{*}M)})^{k}
&\quad =\alpha ^{il}\partial _{l}K_{k}^{j}+K_{l}^{j}\partial _{k}\alpha ^{il}-\alpha ^{jl}\partial _{l}K_{k}^{i}-K_{l}^{i}\partial _{k}\alpha ^{jl}+\\ 
&\quad +\frac{1}{2}(\partial _{k}(K_{l}^{i}\alpha ^{il}-K_{l}^{j}\alpha ^{jl}))-\frac{1}{2}K_{k}^{l}(\partial _{l}(\alpha ^{ij}-\alpha ^{ji}))=0,
\end{aligned}
\end{equation}
\begin{equation}
\begin{aligned}
(N_{\mathfrak{J}}(\partial _{i},dx^{j})\big|_{C^{\infty}(TM)})^{k}
&\quad =H_{i}^{l}\partial _{l}\alpha ^{jk}-\alpha ^{jl}\partial _{l}H_{i}^{k}-H_{l}^{k}\partial _{i}\alpha ^{jl}+ \\ 
&\quad -\alpha ^{lk}(\partial _{i}K_{l}^{j}+\frac{1}{2}\partial _{l}H_{i}^{j}-\frac{1}{2}\partial _{l}K_{i}^{j})=0,
\end{aligned}
\end{equation}
\begin{equation}
\begin{aligned}
(N_{\mathfrak{J}}(\partial _{i},dx^{j})\big|_{C^{\infty}(T^{*}M)})^{k}
&\quad = H_{i}^{l}\partial _{l}K_{k}^{j}+K_{l}^{i}\partial _{k}H_{i}^{l}-\alpha ^{jl}\partial _{l}\beta _{ik}-\beta _{il}\partial _{k}\alpha ^{jl}+\\ 
&\quad +\frac{1}{2}\partial _{k}(\beta _{il}\alpha ^{jl}-K_{l}^{j}H_{i}^{l})-\beta _{lk}\partial _{i}\alpha ^{jl}-K_{k}^{l}(\partial _{i}K_{l}^{j}+\\ 
&\quad +\frac{1}{2}\partial _{l}H_{i}^{j}-\frac{1}{2}\partial _{l}K_{i}^{j})=0.
\end{aligned}
\end{equation}
\end{lemma}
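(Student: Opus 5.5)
The plan is to obtain the local criteria by specializing the preceding lemma (the coordinate-free Courant conditions) to coordinate frames. This mirrors how the local conditions for $[\cdot,\cdot]_{\nabla}$-integrability were derived from Proposition 3.0.1. Since $\mathfrak{J}$ is strong, the Nijenhuis operator with respect to $[\cdot,\cdot]_{C}$ is a tensor (Remark 2.11 and Proposition 2.11.1). Hence $N_{\mathfrak{J}}\equiv 0$ if and only if it vanishes on all pairs of local frame elements $(\partial_i,\partial_j)$, $(dx^i,dx^j)$ and $(\partial_i,dx^j)$. By skew-symmetry of the Courant bracket, $N_{\mathfrak{J}}$ is skew, so the pair $(dx^j,\partial_i)$ gives nothing new. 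It therefore suffices to compute the $TM$ and $T^{*}M$ components of these three evaluations; these are exactly the six displayed equations.

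The computation uses only the elementary coordinate identities
$$[\partial_i,\partial_j]=0,\qquad [f^{l}\partial_l,g^{m}\partial_m]=(f^{l}\partial_l g^{k}-g^{l}\partial_l f^{k})\partial_k,\qquad \mathfrak{L}_{X}(\eta_k dx^k)=(X^{l}\partial_l\eta_k+\eta_l\partial_k X^{l})dx^k,$$
$$d(f)=\partial_k f\,dx^k,\qquad \iota_{\partial_i}dx^j=\delta_i^j .$$
For each of the six conditions in the preceding lemma, I substitute $HX=H_i^l\partial_l$, $\alpha\xi=\alpha^{il}\partial_l$, $\beta Y=\beta_{jl}dx^l$ and $K\eta=K^{j}_{l}dx^l$ (with $X=\partial_i$, $Y=\partial_j$, $\xi=dx^i$, $\eta=dx^j$). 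I then expand the Lie brackets and Lie derivatives with these identities and read off the $k$-th component. For instance, in the $(dx^i,dx^j)$ tangent condition, $[\alpha dx^i,\alpha dx^j]$ yields $\alpha^{il}\partial_l\alpha^{jk}-\alpha^{jl}\partial_l\alpha^{ik}$. The terms $\mathfrak{L}_{\alpha dx^i}dx^j-\mathfrak{L}_{\alpha dx^j}dx^i$ yield $(\partial_k\alpha^{ij}-\partial_k\alpha^{ji})dx^k$, and together with the exact term $\tfrac12 d(\alpha^{ij}-\alpha^{ji})$ this gives the coefficient $\tfrac12\partial_l(\alpha^{ij}-\alpha^{ji})$ contracted with $\alpha^{lk}$.

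I will treat the cases in the order $(\xi,\eta)$, then $(X,Y)$, then $(X,\eta)$, since the first involves the fewest blocks. Throughout, I will use the strong-structure relations of Proposition 2.9.2, namely $K=-H^{*}$, $\alpha=-\alpha^{*}$ and $\beta=-\beta^{*}$. These are needed to match the index placement $K^{j}_{k}$ versus $K^{k}_{j}$ and to rewrite pairings such as $(\beta X)(Y)=\beta_{ij}$ and $\eta(\alpha\xi)=\alpha^{ij}$ consistently.

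The main obstacle is bookkeeping in the mixed conditions $(\partial_i,dx^j)$. There, Lie derivatives of one-forms along $HX$ and $\alpha\eta$ produce both $X^l\partial_l\eta_k$ and $\eta_l\partial_k X^l$ type terms. These combine with the exact terms $\tfrac12 d(\cdot)$ coming from the symmetrization of the Dorfman bracket, and a sign or index-placement error can easily arise. The hardest single check will be the cotangent component of $N_{\mathfrak{J}}(\partial_i,dx^j)$, which mixes all four blocks. I will verify it by using $\alpha=-\alpha^{*}$ to cancel the symmetric parts of the $\tfrac12\partial_k(\cdot)$ terms, and by comparing with the $b$-field transform case $H=K=0$, $\alpha=0$ as a consistency test.
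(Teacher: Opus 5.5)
Your proposal follows the same route as the paper. You substitute $X=\partial_i$, $Y=\partial_j$, $\xi=dx^i$, $\eta=dx^j$ into the coordinate-free conditions of the preceding lemma and expand using the coordinate formulas for the Lie bracket, the Lie derivative of a $1$-form and $d$; your remark that the Courant Nijenhuis operator of an orthogonal $\mathfrak{J}$ is tensorial and skew supplies the justification, implicit in the paper, for testing only on coordinate frames.

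One caution: a careful expansion will not reproduce the displayed formulas \emph{exactly}, because the display contains misprints:
\begin{itemize}
\item In the second condition, the first term should be $H_{i}^{l}\partial_{l}\beta_{jk}$.
\item Also in the second condition, the term $\frac{1}{2}\partial_{k}(\beta_{jl}H_{i}^{l}-\beta_{il}H_{j}^{l})$ enters with a minus sign, as the preceding lemma dictates.
\item In the fourth condition, $K_{l}^{i}\alpha^{il}-K_{l}^{j}\alpha^{jl}$ should read $K_{l}^{i}\alpha^{jl}-K_{l}^{j}\alpha^{il}$.
\item In the sixth condition, $K_{l}^{i}\partial_{k}H_{i}^{l}$ should read $K_{l}^{j}\partial_{k}H_{i}^{l}$.
\end{itemize}
What your computation actually establishes is the corrected version of these conditions.
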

\begin{proof}
The proof proceeds by expressing the conditions of the preceding lemma in local coordinates. One then uses the fundamental properties of the Lie bracket together with the following identities:
$$(\mathfrak{L}_{X}\xi )_{m}=X^{k}\partial _{k}\xi _{m}+\xi _{k}\partial _{m}X^{k},$$
$$[f^{k}\partial _{k} ,g^{l}\partial _{l}]=(f^{k}\partial _{k}g^{l}-g^{k}\partial _{k}f^{l})\partial _{l}+f^{k}g^{l}[\partial _{k},\partial _{l}],$$
$$d(\iota _{X}\eta )=\partial _{k}(X^{j}\eta _{j})dx^{k}.$$
\end{proof}
\begin{es}
We know that strong generalized almost complex structures of complex type are integrable if and only if the almost complex structures that defines them is. For example, consider the standard complex structure $J_{0}$ on $\mathbb{R}^{2n}$. This is integrable and therefore the generalized almost complex structure $\mathfrak{J}_{J_{0}}$ is Courant-integrable. Indeed, one easily checks that, since $\alpha = \beta =0$ and $J_{0}$ is constant in certain global charts, one has $\partial _{i}J_{0}=0$ and therefore all six conditions of the previous lemma are satisfied.
\end{es}

\subsection{Obstructions to Gluing Local Models on $\mathbb{S}^{6}$}
As previously stated, we cannot investigate the Courant integrability of spherical combinations, since they do not constitute strong structures. Moreover, the topological obstructions inherent to $\mathbb{S}^6$ rule out the existence of generalized complex structures of types 0, 1, and 2.
We now state a result on the gluing of generalized tangent bundles, which will be instrumental in the construction of generalized complex structures and will enable us to exhibit other topological obstructions inherent to the six-dimensional sphere $\mathbb{S}^{6}$.
\begin{proposition}
Let $M$ be a smooth manifold admitting a finite open cover $\{ U_{i} \}$. For each $U_{i}$, let $E_{i}$ denote the generalized tangent bundle over $U_{i}$. Let $\{ \varphi _{ij} \}$ be a family of smooth maps defined on the overlaps $U_{ij}=U_{i}\cap U_{j}$,
$$\varphi _{ij} :E_{j}\big|_{U_{ij}}\rightarrow E_{i}\big|_{U_{ij}}$$
satisfying the following conditions:
\begin{nicenum}[label=\roman*.]
\item Each $\varphi _{ij}$ is a $b$-field-transform.
\item $\varphi _{ii}=Id$ and $\varphi _{ij}=\varphi _{ji}^{-1}$.
\item On the triple intersections $U_{ijk}=U_{i}\cap U_{j}\cap U_{k}$, one has $\varphi _{ij}\circ \varphi _{jk}=\varphi _{ik}$ for all $i,j,k $.
\end{nicenum}
Therefore:
\begin{nicenum}
\item It is possible to construct the vector bundle $E$ by gluing of $E_{i}$. On the space of sections of $E$ one can define a pairing $\langle ,\rangle $, an anchor $\rho $, and a bracket which make $E$ into a Courant algebroid.
\item Since the $\varphi _{ij}$ are $b$-field-transforms, they preserve the cotangent subbundle $T^{*}M$. Hence $E$ is an exact Courant algebroid. Consequently, there exist local $2$-forms $b_{i} \Omega ^{2}(U_{i})$ such that on overlaps 
$$\varphi _{ij}=exp (b_{j}-b_{i})$$
and the $3$-forms $db_{i}$ patch together to a global closed $3$-form $H$. Therefore $E$ is isomorphic, as a Courant algebroid, to the exact Courant algebroid $(TM\oplus T^{*}M,\hspace{0.2cm} [\cdot ,\cdot ]_{C}^{H},\hspace{0.2cm} \langle ,\rangle ,\hspace{0.2cm} \rho =\pi )$.
\item Finally, suppose that on each $E_{i}$ there is given a strong generalized almost complex structure $\mathfrak{J}_{i}$, Courant-integrable, and that these structures are compatible in the sense that:
$$\mathfrak{J}_{i}=\varphi _{ij}\circ \mathfrak{J}_{j}\circ \varphi _{ji}$$
on $U_{ij}$. Then the $\mathfrak{J}_{i}$ glue together to define a strong generalized almost complex structure on $E$, which is integrable with respect to the Courant bracket on $E$.
\end{nicenum}
\end{proposition}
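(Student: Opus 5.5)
The plan is to run the standard clutching argument for vector bundles and then check that every structure involved is preserved by $b$-field transforms.

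\emph{Step 1: the bundle and its Courant structure.} Define $E=\bigsqcup_i E_i/\sim$, where $(x,e_j)\sim(x,\varphi_{ij}e_j)$ for $x\in U_{ij}$. Conditions ii and iii (identity and cocycle) make $\sim$ an equivalence relation, and $E$ is a smooth vector bundle with local trivializations $E|_{U_i}\cong E_i$. Each $\varphi_{ij}=e^{b_{ij}}$ with $db_{ij}=0$ preserves three things:
\begin{itemize}
\item the pairing, since $b$-transforms are orthogonal (Definition 2.4);
\item the anchor, since $\pi\circ e^{b}=\pi$;
\item the Courant bracket, since $b$-field transforms preserve it (Definition 2.4).
\end{itemize}
Hence the locally defined pairing, anchor $\rho$ and bracket on each $E_i$ agree on overlaps. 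The bracket is a local (bidifferential) operator, so the brackets of local representatives glue to a bracket on $C^\infty(E)$. The Courant algebroid axioms are local, and they hold on each $E_i$ by Definitions 2.2--2.3, so they hold on $E$.

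\emph{Step 2: exactness and the twist.} Each $\varphi_{ij}$ is the identity on $T^*U_{ij}$. The subbundles $T^*U_i\subset E_i$ therefore glue to a copy of $T^*M$, giving $0\to T^*M\to E\xrightarrow{\rho}TM\to 0$. This sequence is exact because it is exact on each $E_i$, so $E$ is an exact Courant algebroid.

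Next, choose a partition of unity $\{\psi_k\}$ subordinate to the finite cover and set $b_i=\sum_k\psi_k b_{ki}$ on $U_i$. The cocycle condition iii gives $b_{ij}+b_{jk}=b_{ik}$ on triple overlaps, which yields $b_{ij}=b_j-b_i$ (up to the sign convention chosen for $\varphi_{ij}$). Since $db_{ij}=0$, we get $db_i=db_j$ on $U_{ij}$, so $H:=db_i$ is a global closed $3$-form.

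The maps $e^{b_i}\colon E_i\to TU_i\oplus T^*U_i$ are compatible with the transition functions, so they assemble into a global isomorphism $E\cong TM\oplus T^*M$. Under this isomorphism the untwisted bracket transported by $e^{b_i}$ becomes $[\cdot,\cdot]_C^{db_i}=[\cdot,\cdot]_C^{H}$, using the identity $e^{-b}[e^{b}\cdot,e^{b}\cdot]_C=[\cdot,\cdot]_C^{db}$. This identity is the equivariance recalled in Definition 2.5, and I would verify it by a short direct computation.

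\emph{Step 3: the structures.} The compatibility condition $\mathfrak{J}_i=\varphi_{ij}\mathfrak{J}_j\varphi_{ji}$ says exactly that the $\mathfrak{J}_i$ define a well-defined endomorphism $\mathfrak{J}$ of $E$. The properties $\mathfrak{J}^2=-\mathrm{Id}$ and orthogonality are pointwise, so they hold on $E$. Integrability is the vanishing of the Nijenhuis tensor, which is tensorial for strong structures (Remark 2.6) and local. Because the $\varphi_{ij}$ intertwine both the brackets and the $\mathfrak{J}$'s, on $U_i$ we have $N_{\mathfrak{J}}=N_{\mathfrak{J}_i}=0$. Equivalently, by Proposition 2.6.1, the $+i$-eigenbundles $V_i$ glue to an involutive subbundle.

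\emph{Main obstacle.} The delicate point is Step 2: producing the $b_i$ with $b_{ij}=b_j-b_i$ (a Čech-to-de Rham argument needing the cocycle condition), and keeping sign conventions consistent so that the twisting form comes out as $+H$.
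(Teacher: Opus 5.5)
Your proposal is correct and follows essentially the same route as the paper: you clutch $E$ from the $E_i$ via the $\varphi_{ij}$, use that closed $b$-transforms preserve the pairing, anchor and Courant bracket, and then glue the $\mathfrak{J}_i$ (equivalently their $+i$-eigenbundles); you also supply the partition-of-unity construction $b_i=\sum_k\psi_k b_{ki}$ with $b_j-b_i=b_{ij}$ and the explicit isomorphism given by $e^{b_i}$ on $U_i$, both of which the paper only asserts. One small correction: with the paper's convention $[\cdot,\cdot]^H_C=[\cdot,\cdot]_C+\iota_X\iota_Y H$, the identity actually reads $e^{-b}[e^{b}\cdot,e^{b}\cdot]_C=[\cdot,\cdot]_C^{-db}$, and this is exactly what makes the transported bracket $e^{b_i}[e^{-b_i}\cdot,e^{-b_i}\cdot]_C$ equal to $[\cdot,\cdot]_C^{db_i}=[\cdot,\cdot]_C^{H}$, so your conclusion stands but the identity as you quoted it has the wrong sign.
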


\begin{proof}
Define
$$E \coloneqq \bigsqcup_i E_i \big/ \!\sim,$$
where the equivalence relation $\sim$ is given by
$$(x,u_i)\sim (x,u_j)\quad\Longleftrightarrow\quad x\in U_{ij}\ \text{and}\ u_i=\varphi_{ij}(u_j).$$
The compatibility hypothesis on the maps $\varphi_{ij}$ guarantees that $\sim$ is well defined. Hence we obtain a smooth projection
$$\pi:E\longrightarrow M,\qquad [(x,u)]\mapsto x.$$
Since each $E_{i}$ is a generalized tangent bundle, choose local trivializations
$$\Phi_i: E_i|_{U_i}\xrightarrow{\ \cong\ } U_i\times\mathbb R^{2n},\qquad n=\dim_{\mathbb R}M,$$
and let
$$\Psi_i:E|_{U_i}\longrightarrow U_i\times\mathbb R^{2n},\qquad \Psi_i([(x,u)])=\Phi_i(u).$$
Each $\Psi_i$ is a local diffeomorphism and on overlaps one has
$$\Psi_i\circ\Psi_j^{-1}(x,v)=(x,g_{ij}(x)\,v),$$
where
$$g_{ij}(x):=\Phi_i|_{E_i{}_x}\circ\varphi_{ij}|_{E_j{}_x}\circ\Phi_j|_{E_j{}_x}^{-1}\in GL(2n,\mathbb R).$$
Since the $\varphi_{ij}$ are smooth in $x$, the transition maps $g_{ij}$ are smooth, satisfy the cocycle condition and are linear in the fibres. Therefore $E$ is a well-defined smooth vector bundle of rank $2n$ over $M$.
The elements of $E$ are equivalence classes $[(x,u)]$, and the space of smooth sections is
$$C^{\infty}(E)=\Big\{(s_1,\dots,s_m)\in\prod_{i=1}^m\Gamma(E_i)\ :\ s_i|_{U_{ij}}=\varphi_{ij}(s_j|_{U_{ij}})\ \text{for all }i,j\Big\}.$$
We now define a natural pairing on $E$ as follows. Let $e=[(x,u)]$, $f=[(x,v)]\in E_x$ and choose representatives $u_i,v_i\in (E_i)_x$ with $e=[(x,u_i)]$, $f=[(x,v_i)]$ for some $i$ with $x\in U_i$. Set
$$ \langle e,f\rangle_E(x)\coloneqq\langle u_i,v_i\rangle_{E_i}.$$
This is well defined because, on $U_{ij}$, using that each $\varphi _{ij}$ is a $b$-transform which preserves the pairing,
$$\langle u_j,v_j\rangle_{E_j}=\langle\varphi_{ji}u_i,\varphi_{ji}v_i\rangle_{E_j}=\langle u_i,v_i\rangle_{E_i},$$
and smoothness follows from the smoothness of the local pairings. \\ Define the anchor $\rho:E\to TM$ by
$$\rho([(x,u_i)])\coloneqq\rho_i(u_i),$$
where $\rho_i$ denotes the anchor of $E_i$. This is well defined since $\rho_j\circ\varphi_{ji}=\rho_i$ on overlaps. 
\\ For sections $s=(s_i)_i,t=(t_i)_i\in C^{\infty}(E)$ define the Courant bracket by
$$[s,t]^E_C\coloneqq\big([s_i,t_i]^{E_i}_C\big)_i.$$
This is well defined: on $U_{ij}$,
$$[s_i,t_i]^{E_i}_C=[\varphi_{ij}s_j,\varphi_{ij}t_j]^{E_i}_C=\varphi_{ij}\big([s_j,t_j]^{E_j}_C\big),$$
because each $\varphi_{ij}$ is an automorphism of the local Courant algebroid. Since each $E_i$ satisfies the Courant algebroid axioms and the $\varphi_{ij}$ preserve these structures, $E$ with the pairing $\langle\cdot,\cdot\rangle_E$, the anchor $\rho$ and bracket $[\cdot,\cdot]^E_C$ is a Courant algebroid.
\\ Moreover, the $\varphi_{ij}$ preserve the cotangent subbundle $T^*M$, hence $E$ is exact. Consequently there exist local $2$--forms $B_i\in\Omega^2(U_i)$ with $\varphi_{ij}=\exp(B_j-B_i)$ on $U_{ij}$, and the forms $dB_i$ glue to a global closed $3$--form $H$, the \v{S}evera class of $E$. Consequently $E$ is isomorphic (as a Courant algebroid) to $(TM\oplus T^*M,[\cdot,\cdot]^H_C,\langle\cdot,\cdot\rangle,\pi)$.
\\ Finally, suppose that each $E_i$ carries a strong generalized almost complex structure $\mathfrak J_i$ which is integrable with respect to the local Courant bracket, and that on overlaps
$$ \mathfrak J_i=\varphi_{ij}\circ\mathfrak J_j\circ\varphi_{ij}^{-1}\qquad\text{on }U_{ij}.$$
Choose local identifications
$$ \gamma_i:E|_{U_i}\xrightarrow{\ \cong\ }TU_i\oplus T^*U_i$$
such that $\gamma_i\circ\gamma_j^{-1}=\varphi_{ij}$ on $U_{ij}$. Define a global endomorphism $\mathfrak J:E\to E$ by
$$ \mathfrak J([(x,u)])\coloneqq \gamma_i^{-1}\big(\mathfrak J_i(\gamma_i([(x,u)]))\big)\qquad(x\in U_i).$$
We check that $\mathfrak J$ is well defined. If $x\in U_{ij}$ and $[(x,u)]=[(x,v)]$ then $v=\varphi_{ji}(u)$; hence
$$ \begin{aligned}
\mathfrak J([(x,u)]) &= \gamma_i^{-1}\big(\mathfrak J_i(\gamma_i([(x,u)]))\big)
= \gamma_i^{-1}\big(\mathfrak J_i(u)\big)\\
&= \gamma_i^{-1}\big(\varphi_{ij}\mathfrak J_j\varphi_{ij}^{-1}(u)\big)
= \gamma_j^{-1}\big(\mathfrak J_j(v)\big)
= \mathfrak J([(x,v)]).
\end{aligned}$$
Thus $\mathfrak J$ is well defined.
\\ Moreover, for every $[(x,u)]$ with $x\in U_i$,
$$\mathfrak J^2([(x,u)])=\gamma_i^{-1}\big(\mathfrak J_i^2(\gamma_i([(x,u)]))\big)
=\gamma_i^{-1}(-\mathrm{Id})(\gamma_i([(x,u)]))=-[(x,u)],$$
so $\mathfrak J^2=-\mathrm{Id}$. Orthogonality follows from the fact that each $\mathfrak J_i$ is orthogonal and $\gamma_i$ identifies the local pairings: for $e=[(x,u)]$, $f=[(x,v)]$ with $x\in U_i$,
$$ \langle\mathfrak J e,\mathfrak J f\rangle_E
= \langle\gamma_i(\mathfrak J e),\gamma_i(\mathfrak J f)\rangle
= \langle\mathfrak J_i u,\mathfrak J_i v\rangle
= \langle u,v\rangle
= \langle e,f\rangle_E.$$
Let $L_i\subset E_i\otimes\mathbb C$ be the $+i$-eigenbundle of $\mathfrak J_i$. The compatibility condition implies that the subbundles $\gamma_i^{-1}(L_i)\subset E|_{U_i}\otimes\mathbb C$ agree on overlaps, hence they glue to a global maximal isotropic subbundle $L\subset E\otimes\mathbb C$, which is the $+i$--eigenbundle of $\mathfrak J$.
\\ To prove integrability, let $s,t\in C^{\infty}(L)$. Locally $s$ and $t$ correspond to families $(s_i)_i,(t_i)_i$ with $s_i,t_i\in C^{\infty}(L_i)$ and $s_i=\varphi_{ij}(s_j)$ on overlaps. Since each $L_i$ is involutive with respect to the local Courant bracket, $[s_i,t_i]^{E_{i}}_{C}\in C^{\infty}(L_i)$ for every $i$. The global bracket was defined by $[s,t]^E_C=([s_i,t_i]^{E_{i}}_{C})_i$, and because the local brackets glue compatibly (the $\varphi_{ij}$ are Courant automorphisms), $[s,t]^{E}_{C}$ is a global section of $L$. Thus $L$ is involutive and $\mathfrak J$ is Courant-integrable.
\end{proof}

\begin{oss}
It is important to note that since the gluing maps $\varphi _{ij}$ in Proposition 4.0.1 are $b$-field transforms, they preserve the cotangent subbundle $T^{*}M$. Consequently, the type of the generalized almost complex structures are invariant under such gluing procedures. This implies that any structure constructed via this method must have an uniform type.
\end{oss}

\begin{proposition}
Let $\mathfrak{J}_N = \begin{pmatrix} H_N & \alpha_N \\ \beta_N & K_N \end{pmatrix}$ be a strong generalized almost complex structure defined on the northern hemisphere $U_N \cong \mathbb{R}^6$ of $\mathbb{S}^6$. Suppose that:
\begin{enumerate}
    \item $\alpha_N$ satisfies at infinity: $\lim _{|x|\to \infty} \frac{\alpha _{N}(x)}{|x|^{4}}=0$.
    \item $H_N$ satisfies $\lim_{t\to\infty} H_N(tv) = H_\infty$ for every $v \in \mathbb{S}^5$.
\end{enumerate}
Then $\mathfrak{J}_N$ cannot be extended to a globally smooth generalized almost complex structure on $\mathbb{S}^6$ using a transition map composed of the stereographic projection differential and any smooth $b$-field transform.
\end{proposition}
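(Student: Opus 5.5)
We argue by contradiction: assume $\mathfrak{J}_N$ extends smoothly to $\mathbb{S}^{6}$ through a transition map $\Phi=e^{B}\circ \widetilde{d\sigma}$. Here $\sigma:U_N\setminus\{0\}\to U_S\setminus\{0\}$ is the inversion $y=\sigma(x)=x/|x|^{2}$ relating the two stereographic charts, $\widetilde{d\sigma}=d\sigma\oplus (d\sigma^{-1})^{*}$ is its natural lift to $TM\oplus T^{*}M$, and $B$ is a smooth $2$-form on the overlap. In the southern chart the structure is then $\mathfrak{J}_S=\Phi\circ\mathfrak{J}_N\circ\Phi^{-1}$. Smoothness of the global structure means $\mathfrak{J}_S$ extends smoothly to $y=0$, which corresponds to $|x|\to\infty$. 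We will show that the hypotheses force this limit either to fail to exist or to be a degenerate endomorphism incompatible with $\mathfrak{J}_S(0)^{2}=-Id$.

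\textbf{Step 1: block form of $\mathfrak{J}_S$.} Write $D=d\sigma(x)$; explicitly $D=|x|^{-2}(I-2\hat x\hat x^{T})$ with $\hat x=x/|x|$. First conjugate by $\widetilde{d\sigma}$, which gives the blocks $(DH_ND^{-1},\ D\alpha_ND^{T},\ D^{-T}\beta_ND^{-1},\ D^{-T}K_ND^{T})$. Then conjugate by $e^{B}$: the $\alpha$-block is unchanged, the $H$-block becomes $DH_ND^{-1}-D\alpha_ND^{T}B$, and the remaining blocks acquire the corresponding $B$-corrections. The $\alpha$-block of $\mathfrak{J}_S$ is therefore
\[
\alpha_S(y)=D\alpha_N D^{T}=|x|^{-4}R\,\alpha_N R,\qquad R=I-2\hat x\hat x^{T},
\]
where $R$ is orthogonal.

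\textbf{Step 2: the $\alpha$-block degenerates.} Hypothesis 1 gives $|\alpha_S|\le |x|^{-4}|\alpha_N|\to 0$, so $\alpha_S(0)=0$. By Proposition 2.9.2 the relations $\alpha\beta=-(I+H^{2})$ and $H\alpha=-\alpha K$ then force $H_S(0)^{2}=-I$ and $K_S(0)=-H_S(0)^{*}$. So the limit point is of complex type, and $H_S(0)=\lim DH_ND^{-1}$, because the $B$-correction to the $H$-block is $\alpha_S B$ and $B$ is smooth near $y=0$.

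\textbf{Step 3: the $H$-block has no limit.} We have $DH_ND^{-1}=R\,H_N R$, since the scalar factors $|x|^{\mp 2}$ cancel. Along the ray $x=tv$, hypothesis 2 gives
\[
RH_NR\;\to\;R_vH_\infty R_v,\qquad R_v=I-2vv^{T}.
\]
For $H_S$ to be continuous at $0$, this limit must be independent of $v\in\mathbb{S}^{5}$. Every $R_v$ is a reflection, so independence means $R_vH_\infty R_v=R_wH_\infty R_w$ for all $v,w$. Hence $H_\infty$ commutes with $R_vR_w$ for all $v,w$, and products of two reflections generate $SO(6)$. By Schur's lemma the commutant of $SO(6)$ acting on $\mathbb{R}^{6}$ is $\mathbb{R}\,I$, so $H_\infty=\lambda I$. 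Step 2 requires $H_S(0)^{2}=-I$, which would give $\lambda^{2}=-1$, impossible for real $\lambda$. This is the contradiction.

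We expect the main obstacle to be Step 2. Hypothesis 1 controls only $\alpha_N$, so we must check that neither $B$ nor the relations of Proposition 2.9.2 allow $\beta_S$ to blow up in a way that restores nondegeneracy. The point to verify is that $\alpha_S(0)=0$ together with smoothness of $\mathfrak{J}_S$ at $0$ already pins down $H_S(0)^{2}=-I$ through $\alpha\beta=-(I+H^{2})$ evaluated at $y=0$. This works provided $\beta_S$ is merely bounded there, which smoothness of the assumed extension guarantees.
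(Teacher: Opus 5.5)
\textbf{The gap: the limit $\alpha_S B\to 0$ is not justified.}
Apart from one step, your argument is the paper's proof. The factor $|y|^{4}$ in the $\alpha$-block together with hypothesis 1 gives $\alpha_S(0)=0$, and hence $H_S(0)^2=-I$. The $H$-block is $RH_NR-\alpha_S B$. The ray limits $R_vH_\infty R_v$ must be independent of $v$, and commutation with $R_vR_w$ plus Schur gives $\lambda^2=-1$. (You apply Schur to $H_\infty$ rather than to $H_S(0)$, which makes no difference.)

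The unjustified step is that the $B$-correction $\alpha_S B$ vanishes at the pole because ``$B$ is smooth near $y=0$''. In your own setup $B$ is a smooth closed $2$-form on the overlap $U_N\cap U_S$. In the southern chart that overlap is $\mathbb{R}^6\setminus\{0\}$, so nothing bounds $B$ at the south pole. Since $\alpha_S\to 0$ there, a $B$ blowing up at the reciprocal rate gives $\alpha_S B$ a finite, nonzero, direction-dependent limit.

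Here is an example. Take a constant complex structure $J$ and a constant real bivector $Q\neq 0$ of type $(2,0)+(0,2)$, so that $JQ=QJ^{*}$. Then $\mathfrak{J}_S=\begin{pmatrix} J & |y|^{2}Q\\ 0 & -J^{*}\end{pmatrix}$ is a smooth strong structure with $\alpha_S(0)=0$. The form $B=d\bigl(y_1/|y|^{2}\bigr)\wedge dy_2$ is closed on $\mathbb{R}^6\setminus\{0\}$. The rescaled form $|y|^{2}B=dy_1\wedge dy_2-2\hat y_1(\hat y\cdot dy)\wedge dy_2$ depends only on $\hat y=y/|y|$. Hence $\alpha_S B=Q\circ(|y|^{2}B)$ is constant along each ray and nonzero for suitable $Q$. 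This is the same mechanism as in type-changing structures, where $\omega$ and the $B$-field both blow up while $\omega^{-1}B$ stays bounded.

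\textbf{Consequence and how to repair it.}
Set $C_v:=\lim_{t\to 0}\alpha_S(tv)B(tv)$. If $C_v\neq 0$, the ray relation becomes $R_vH_\infty R_v=H_S(0)+C_v$, and the commutation-plus-Schur step no longer applies. So your argument is complete only under an extra assumption: the $b$-field extends smoothly, or at least boundedly, across the south pole. Under that assumption the correction term vanishes in the limit and everything else you wrote goes through.

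The paper's proof has exactly the same lacuna: it asserts $\alpha_S(tv)b(tv)\to 0$ without comment. So under this tacit reading your proof agrees with the paper's. For the literal statement, where $b$ is given only on the overlap, you need a separate argument excluding $C_v\neq 0$. That argument must handle the case where $\alpha_S$ does not vanish identically on any neighbourhood of the pole.
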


\begin{proof}
Let $\mathfrak{J}_N = \begin{pmatrix} H_N & \alpha_N \\ \beta_N & K_N \end{pmatrix}$ be a strong generalized almost complex structure on $U_{N}$.
Let $x$ and $y$ be the stereographic coordinates in the northern and southern chart, respectively. The transition map is given by $x = \phi(y) = \frac{y}{|y|^2}$, whose Jacobian matrix is $J_\phi(y) = \frac{1}{|y|^2} R(y)$, where $R(y) = I - 2\frac{yy^T}{|y|^2}$ is the orthogonal reflection matrix.
The transition map on $U_N \cap U_S$ is the composition of the pushforward $d\phi_{*}$ and a $b$-field transform $e^{b}$, so that  $\mathfrak{J}_S = e^b (d\phi_* \mathfrak{J}_N) e^{-b}$ on the southern chart.
Under the pushforward, the bivector block $\alpha _{N}$ is mapped to:
$$\tilde{\alpha}(y) = J_\phi(y)^{-1} \alpha_N\left(\frac{y}{|y|^2}\right) J_\phi(y)^{-T} = |y|^4 R(y) \alpha_N\left(\frac{y}{|y|^2}\right) R(y).$$
Since $b$-field transforms leave the bivector block invariant, the corresponding block of $\mathfrak{J}_{S}$ is $\alpha_S(y) = \tilde{\alpha}(y)$.
Smoothness of $\mathfrak{J}_S$ at the south pole requires the limit as $y \to 0$ to exist and be finite. Using condition 1, as $y \to 0$ (which corresponds to $|x| \to \infty$), the product $|y|^4 \alpha_N\left(\frac{y}{|y|^2}\right)$ vanishes. Since $R(y) \in O(6)$, we obtain:
$$\lim_{y \to 0} \alpha_S(y) = 0 \implies \alpha_S(0) = 0.$$
Since $\mathfrak{J}_S^2 = -Id$, evaluating the upper-left block of $\mathfrak{J}_S^2$ at $y=0$ yields:
$$H_S(0)^2 + \alpha_S(0)\beta_S(0) = -I \implies H_S(0)^2 = -I.$$
Furthermore, $H_{S}(y)$ is given by
$$H_S(y) = R(y) H_N\left(\frac{y}{|y|^2}\right) R(y) - \alpha_S(y) b(y).$$
Let $v\in \mathbb{S}^{5}$ and set $y=tv$. As $t \to 0$, we have $\alpha _{S}(tv)b(tv) \to 0$. By condition 2, $H_{N}(\frac{v}{t})\to H_{\infty}$. Thus we obtain:
$$H_S(0) = \lim_{t \to 0} \left[ R(v) H_N\left(\frac{v}{t}\right) R(v) \right] = R(v) H_\infty R(v).$$
Since this relation must hold for every unit vector $v \in \mathbb{S}^5$, we can isolate $H_\infty$ by multiplying on the left and right by $R(v)^{-1} = R(v)$, obtaining:
$$H_\infty = R(v) H_S(0) R(v) \quad \forall v \in \mathbb{S}^5.$$
Since the left-hand side is independent of $v$, for any two unit vectors $v, w \in \mathbb{S}^5$ we must have $R(v) H_S(0) R(v) = R(w) H_S(0) R(w)$, which rearranges to:
$$H_S(0) R(v) R(w) = R(v) R(w) H_S(0).$$
This shows that $H_S(0)$ commutes with all elements of the form $R(v) R(w)$. Since the products of two reflections generate the special orthogonal group $SO(6)$, $H_S(0)$ is an $SO(6)$-equivariant endomorphism. It is a well-known fact that the standard representation of $SO(6)$ on $\mathbb{R}^6$ is absolutely irreducible over $\mathbb{R}$. Therefore, by Schur's Lemma \cite{FH}, the only endomorphisms commuting with the entire group action are scalar multiples of the identity. 
Hence, we directly obtain $H_S(0) = \lambda I$ for some real scalar $\lambda \in \mathbb{R}$.
However, the condition evaluated earlier, $H_S(0)^2 = -I$, now yields:
$$(\lambda I)^2 = -I \implies \lambda^2 = -1.$$
This equation has no real solutions. This contradiction proves the claim.

\end{proof}
\begin{corollary}
There exists no globally smooth strong generalized almost complex structure on $\mathbb{S}^6$ that can be obtained by gluing locally flat models defined over the standard two-chart stereographic cover using $b$-field transforms.
\end{corollary}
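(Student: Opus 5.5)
The plan is to reduce the corollary to the preceding Proposition. The first step is to make precise what a "locally flat model" over the standard two-chart stereographic cover $\{U_N,U_S\}$ is. I read it as a strong generalized almost complex structure $\mathfrak{J}_N$ on $U_N\cong\mathbb{R}^6$ whose blocks $H_N,\alpha_N,\beta_N,K_N$ are constant in the stereographic coordinates $x$ (and similarly for $\mathfrak{J}_S$ in the coordinates $y$). Such a constant model is automatically Courant integrable: as in the example of $\mathfrak{J}_{J_0}$ on $\mathbb{R}^{2n}$, all derivative terms in the local integrability conditions of Lemma 4.2 vanish. So the only remaining issue is the gluing. By Proposition 4.0.1, gluing along $U_N\cap U_S$ is done by a transition map consisting of the differential of $x=y/|y|^2$ composed with a $b$-field transform $e^{b}$. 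By Remark 4.0.1 the type is preserved under this gluing, so the glued structure must be of a single type.

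Next I check that a constant model satisfies the two hypotheses of Proposition 4.0.2.
\begin{itemize}
\item Since $\alpha_N$ is constant, $\alpha_N(x)/|x|^4\to 0$ as $|x|\to\infty$, which gives condition 1.
\item Since $H_N$ is constant, $\lim_{t\to\infty}H_N(tv)=H_N=:H_\infty$ for every $v\in\mathbb{S}^5$, which gives condition 2.
\end{itemize}
Proposition 4.0.2 then says that $\mathfrak{J}_N$ cannot be extended smoothly across the south pole by any transition of this form. Hence no globally smooth strong structure arises from such a gluing.

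The step I expect to be the main obstacle is the symmetric situation and the reading of the hypotheses. The corollary quantifies over gluings of two flat models, and I need to make sure the argument does not depend on which chart is called "northern". The argument only uses that one chart's model is flat and is transported to the other chart. I will phrase it for whichever chart supplies the model and is transported to cover the complementary pole; the symmetry $x\leftrightarrow y$ of the stereographic transition makes the two cases identical.

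I would also add a remark on the degenerate possibility that "locally flat" means flat only up to a $b$-field, i.e.\ $e^{b_N}\mathfrak{J}_{const}e^{-b_N}$. In that case the $\alpha$ block is still constant, and $H_N = H_{const}-\alpha_{const} b_N$ together with the smoothness of $b_N$ near infinity needs to be controlled. Here I would absorb $e^{b_N}$ into the transition $b$-field, which Proposition 4.0.2 allows to be arbitrary, and so reduce back to the constant case.
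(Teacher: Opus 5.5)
Your proposal is the paper's proof: constant blocks trivially satisfy conditions 1 and 2 of Proposition 4.0.2 (with $H_\infty=H_N$), and that proposition's Schur-lemma contradiction $\lambda^2=-1$ rules out any smooth extension over the south pole. One caution about your optional last paragraph: in the southern chart the transported $b_N$ is $|y|^{-4}R(y)\,b_N(y/|y|^2)\,R(y)$, so after you absorb it into the transition field its contribution to $\alpha_S b$ is $R(y)\,\alpha_{const}\,b_N(y/|y|^2)\,R(y)$, which does not tend to $0$, and the proof of Proposition 4.0.2 (which needs $\alpha_S(tv)b(tv)\to 0$) no longer applies; if you want to cover that reading, instead use that $\alpha$ is invariant under $b$-transforms, so constancy of $\alpha_S$ together with $\alpha_S(y)=|y|^4R(y)\alpha_N R(y)\to 0$ forces $\alpha_N=\alpha_S=0$, after which the $b$-fields drop out of the $H$-block and Schur's lemma applies directly.
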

\begin{proof}
Assume, for contradiction, that one can construct a global structure on $\mathbb{S}^6$ starting from a local model $\mathfrak{J}_N = \begin{pmatrix} H_N & \alpha_N \\ \beta_N & K_N \end{pmatrix}$ defined on the northern stereographic chart $U_N \cong \mathbb{R}^6$, where the tensor components are constant matrices.
Such a structure satisfies the hypotheses of Proposition 4.0.2, since $\alpha _{N}$ and $H_{N}$ are constant.
Following the proof of the previous proposition, a smooth extension to the south pole requires $H_{N}$ to commute with every reflection matrix. By Schur's Lemma, $H_{N}=\lambda I$. The condition $\mathfrak{J}_{N}^{2}=-I$ implies $\lambda ^{2}=-1$ and so we have a contradiction.
\end{proof}

\begin{es}
As a comparison, consider the parallelizable manifold $\mathbb{T}^{6}$. Since $T\mathbb{T}^6$ is globally trivial, it admits an atlas where all geometric transition maps are the identity ($g_{ij} = I$).
If we equip local charts $U_i \subset \mathbb{T}^6$ with the standard flat complex structure $J_0$, the transition maps $\Phi_{ij}$ reduce to the $b$-field transforms $e^{b_{ij}}$. By Proposition 4.0.1, these local structures glue smoothly without any directional singularities, yielding a globally well-defined, Courant-integrable strong generalized complex structure on $\mathbb{T}^6$. This confirms that the obstruction on $\mathbb{S}^6$ is strictly topological and not an artifact of the $b$-field gluing procedure.
\end{es}

\nocite{*}
\bibliographystyle{plain}
\bibliography{bibliografia}

\end{document}